\newtheorem{theorem}{Theorem}[section]
\newtheorem{proposition}[theorem]{Proposition}
\newtheorem{lemma}[theorem]{Lemma}
\newtheorem{problem}[theorem]{Problem}
\newtheorem{definition}[theorem]{Definition}
\newcommand{\D}{\mathcal{D}}
\begin{document}
	
\title{
	The covering threshold of a directed acyclic graph by directed acyclic subgraphs}
	
\author{
	Raphael Yuster
	\thanks{Department of Mathematics, University of Haifa, Haifa 3498838, Israel. Email: raphael.yuster@gmail.com\;.}
}
	
\date{}
	
\maketitle
	
\setcounter{page}{1}
	
\begin{abstract}
Let $H$ be a directed acyclic graph (dag) that is not a rooted star.
It is known that there are constants $c=c(H)$ and $C=C(H)$ such that the following holds
for $D_n$, the complete directed graph on $n$ vertices.
There is a set of at most $C\log n$ directed acyclic subgraphs of $D_n$ that covers every $H$-copy of $D_n$,
while every set of at most $c\log n$ directed acyclic subgraphs of $D_n$ does not cover all $H$-copies.
Here this dichotomy is considerably strengthened.

Let ${\vec G}(n,p)$ denote the Erd\H{o}s-R\'enyi and Gilbert probability space of all directed graphs with $n$ vertices and with edge probability $p$.
The {\em fractional arboricity} of $H$ is $a(H) = max \{\frac{|E(H')|}{|V(H')|-1}\}$, where the
maximum is over all non-singleton subgraphs of $H$.
If $a(H) = \frac{|E(H)|}{|V(H)|-1}$ then $H$ is {\em totally balanced}. All complete graphs, complete multipartite graphs, cycles, trees, and, in fact, almost all graphs, are totally balanced. We prove:

\begin{itemize}
	\item
	Let $H$ be a dag with $h$ vertices and $m$ edges which is not a rooted star.
	For every $a^* > a(H)$ there exists $c^* = c^*(a^*,H) > 0$ such that asymptotically almost surely $G \sim {\vec G}(n,n^{-1/a^*})$ has the property that every set $X$ of at most $c^*\log n$ directed acyclic subgraphs of $G$ does not cover all $H$-copies of $G$.\\
	Moreover, there exists $s(H) = m/2 + O(m^{4/5}h^{1/5})$ such that the following stronger assertion holds for any such $X$: 	
	There is an $H$-copy in $G$ that has no more than $s(H)$ of its edges covered by each element of $X$.
	\item 
	If $H$ is totally balanced then for every $0 < a^* < a(H)$, asymptotically almost surely $G \sim {\vec G}(n,n^{-1/a^*})$ has a single directed acyclic subgraph that covers all its $H$-copies.
\end{itemize}
As for the first result, note that if $h=o(m)$ then $s(H)=(1+o_m(1))m/2$ is essentially half of the edges of $H$. In fact, for infinitely many $H$ it holds that $s(H)=m/2$, optimally.
As for the second result, the requirement that $H$ is totally balanced cannot, generally, be relaxed.

\vspace*{3mm}
\noindent
{\bf AMS subject classifications:} 05C20, 05C35, 05C70\\
{\bf Keywords:} directed acyclic graph; covering

\end{abstract}

\section{Introduction} 

Our main objects of study are simple finite directed graphs. We denote by $D_n$ 
the complete directed $n$-vertex graph consisting of all possible $n(n-1)$ edges.
An important subclass of directed graphs are {\em directed acyclic graphs} (hereafter, a directed acyclic graph is called a {\em dag}) which are directed graphs with no directed cycles.
The largest dag on $n$ vertices is the transitive tournament, denoted here by $T_n$.

It is easily observed that the edge-set of every directed graph $G$ is the disjoint union of two dags.
Indeed, consider some permutation $\pi$ of $V(G)=[n]$.
Let $G_L(\pi)$ be the spanning subgraph of $G$ where $(i,j)\in E(G_L(\pi))$ if and only if $(i,j) \in E(G)$ and $\pi(i) < \pi(j)$.
Let $G_R(\pi)$ be the spanning subgraph of $G$ where $(i,j)\in E(G_R(\pi))$ if and only if $(i,j) \in E(G)$ and $\pi(i) > \pi(j)$.
Since $E(G_R(\pi)) \cup E(G_L(\pi)) = E(G)$, we can cover the edges of $G$ using just two
dag-subgraphs of $G$.

However, the aforementioned edge-covering observation becomes more involved if instead of just covering edges, we aim to cover all given $H$-subgraphs\footnote{To avoid trivial cases, we assume hereafter that $H$ has at least two edges and no isolated vertices.} of $G$ with as few as possible dag-subgraphs.
Of course, for this to be meaningful we assume that $H$ is a dag.
More formally, we say that a subgraph $H^*$ of $G$ that is isomorphic to $H$ (i.e. an $H$-copy of $G$)
is {\em covered} by $\pi$, if $H^*$ is a subgraph of $G_L(\pi)$.
What is the minimum number of permutations required to cover all $H$-copies of $G$?
Let $\tau(H,G)$ be the smallest integer $t$ such that the following holds:
There are permutations $\pi_1,\ldots,\pi_t$ of $V(G)$ such that each $H$-copy of $G$ is
covered by at least one of the $\pi_i$.
Trivially, $\tau(H,G)$ exists as we can just consider all possible permutations and use the fact that each $H$-copy, being a dag, has a topological ordering.

While determining $\tau(H,G)$ is generally a difficult problem, reasonable bounds are known
for $\tau(H,D_n)$. In fact, $\tau(T_h,D_n)$ is equivalent to a well-studied problem in the setting of permutations. An {\em $(n,h)$-sequence covering array} (SCA) is a set $X$ of permutations of $[n]$ such that
each sequence of $h$ distinct elements of $[n]$ is a subsequence of at least one of the permutations.
So, clearly, $\tau(T_h,D_n)$ is just the minimum size of an $(n,h)$-SCA.
The first to provide nontrivial bounds for $\tau(T_h,D_n)$ was Spencer \cite{spencer-1972} and various improvements on the upper and lower bounds were sequentially obtained by Ishigami \cite{ishigami-1995,ishigami-1996}, F\"uredi \cite{furedi-1996}, Radhakrishnan \cite{radhakrishnan-2003}, Tarui
\cite{tarui-2008} and the author \cite{yuster-2020}.
See also the paper \cite{CCHZ-2013} for further results and references to many applications.
The asymptotic state of the art regarding $\tau(T_3,D_n)$ is the upper bound of Tarui \cite{tarui-2008}
and the lower bound of F\"uredi \cite{furedi-1996}:
\begin{equation}\label{e:1}
\frac{2}{\log e} \log n \le \tau(T_3,D_n)  \le (1+o_n(1))2 \log n\;. \footnote{Unless stated otherwise, all logarithms are in base $2$.}
\end{equation}
For general fixed $h$, the best asymptotic upper and lower bounds are that of the author
\cite{yuster-2020} and Radhakrishnan \cite{radhakrishnan-2003}, respectively:
\begin{equation}\label{e:2}
(1-o_n(1))\frac{(h-1)!}{\log e} \log n \le \tau(T_h,D_n) \le \ln 2 \cdot h!(h-1) \log n + C_h\;.
\end{equation}
It is immediate to see that if $H$ has $h$ vertices, then $\tau(H,D_n) \le \tau(T_h,D_n)$, hence \eqref{e:1} and (\ref{e:2}) serve as upper bounds for $\tau(H,D_n)$ when $H$ has three vertices or, respectively, 
$h$ vertices. In particular, we have that $\tau(H,D_n) = O(\log n)$. However, for some $H$, this upper bound is far from optimal. Suppose that $H$ is a rooted star, meaning that $H$ is a star and the center of the star is either a source or a sink. It is proved in \cite{FHRT-1992,spencer-1972} that for such $H$ 
\begin{equation}\label{e:3}
\tau(H,D_n) = \Theta(\log\log n)\;.
\end{equation}
The  permutations in the upper bound construction of \eqref{e:3} are such that for any $v_1 \in [n]$
and any $h-1$ distinct elements $v_2,\ldots,v_h \in [n] \setminus \{v_1\}$ there is a permutation in which
$v_1$ appears before all of $v_2,\ldots,v_h$. As it turns out, rooted stars are the {\em only} dags for which $\tau(H,D_n)$ is sub-logarithmic.
\begin{theorem}\label{t:0}
	Let $H$ be a dag that is not a rooted star. Then $\tau(H,D_n)=\Theta(\log n)$.
\end{theorem}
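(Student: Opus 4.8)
The excerpt already records the upper bound: $\tau(H,D_n)\le\tau(T_h,D_n)=O(\log n)$, since a dag that is not a rooted star and has no isolated vertex and at least two edges has at least three vertices, so $h\ge 3$ and \eqref{e:1}, \eqref{e:2} apply. So the plan is to prove the matching lower bound $\tau(H,D_n)=\Omega(\log n)$, and I would do this by reducing to two minimal ``obstruction'' dags via a monotonicity principle and a structural dichotomy.

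\textbf{Monotonicity.} For any subgraph $H'$ of $H$ and any $n\ge|V(H)|$ one has $\tau(H,D_n)\ge\tau(H',D_n)$. Given permutations $\pi_1,\dots,\pi_t$ of $[n]$ covering every $H$-copy of $D_n$, take any $H'$-copy of $D_n$; since $D_n$ is complete and at least $|V(H)|-|V(H')|$ vertices lie outside it, extend this copy to an $H$-copy by placing the remaining vertices of $H$ on fresh vertices and adding the corresponding edges. Some $\pi_i$ covers this $H$-copy, i.e.\ orients all of its edges increasingly; in particular it orients increasingly the edges of the chosen $H'$-copy, so $\pi_i$ covers that copy too. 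Hence $\pi_1,\dots,\pi_t$ covers all $H'$-copies.

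\textbf{Structure.} Every dag $H$ (with at least two edges and no isolated vertex) that is not a rooted star contains, as a subgraph, the directed path $\vec{P}_3$ on three vertices ($a\to b\to c$) or the matching $2K_2$ of two vertex-disjoint directed edges. If $H$ has two vertex-disjoint edges we are done. Otherwise the edge set of $H$, viewed as a family of $2$-sets, is intersecting, so (a standard fact) either all edges pass through a common vertex $v$, or $H$ has exactly three edges forming a triangle; a directed triangle is not acyclic, so the triangle case yields a transitive triangle, which contains $\vec{P}_3$. If all edges meet $v$ and are all directed out of $v$, or all into $v$, then $H$ is a rooted star, contrary to hypothesis; otherwise $H$ has an edge $a\to v$ and an edge $v\to b$ with $a\ne b$ (equality would give a directed $2$-cycle), i.e.\ a $\vec{P}_3$.

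\textbf{The two base cases.} By monotonicity it now suffices to prove $\tau(\vec{P}_3,D_n)=\Omega(\log n)$ and $\tau(2K_2,D_n)=\Omega(\log n)$. The first is essentially free: a $\vec{P}_3$-copy $x\to y\to z$ is covered by $\pi_i$ precisely when $x$ precedes $y$ precedes $z$ in $\pi_i$, so a family of permutations covers all $\vec{P}_3$-copies of $D_n$ if and only if it is an $(n,3)$-SCA, and the minimum size of such an array is $\Omega(\log n)$ by \eqref{e:1}. For the second, suppose $\pi_1,\dots,\pi_t$ cover all $2K_2$-copies of $D_n$, and for an ordered pair of distinct elements $(x,y)$ put $S(x,y)=\{i:x\text{ precedes }y\text{ in }\pi_i\}\subseteq[t]$, so that $S(y,x)=[t]\setminus S(x,y)$ and the copy with edges $x\to y$, $z\to w$ is covered iff $S(x,y)\cap S(z,w)\ne\emptyset$. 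Attach to each unordered pair $\{x,y\}$ the bipartition $\{S(x,y),S(y,x)\}$ of $[t]$. If two vertex-disjoint pairs $\{x,y\}$ and $\{z,w\}$ received the same bipartition $\{P,[t]\setminus P\}$, then among the four sets $S(x,y)\cap S(z,w)$, $S(x,y)\cap S(w,z)$, $S(y,x)\cap S(z,w)$, $S(y,x)\cap S(w,z)$ one equals $P\cap([t]\setminus P)=\emptyset$, so one of the four $2K_2$-copies on $\{x,y,z,w\}$ with matching $\{\{x,y\},\{z,w\}\}$ is uncovered, a contradiction. Hence the $\lfloor n/2\rfloor$ bipartitions coming from a fixed (near-)perfect matching on $[n]$ are pairwise distinct, so $\lfloor n/2\rfloor\le 2^{t-1}$ and $t=\Omega(\log n)$. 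I expect this last step --- the bipartition pigeonhole for $2K_2$ --- to be the main obstacle; the structural dichotomy is routine, but its boundary cases (exactly two edges, and the triangle) deserve care, as that is precisely where the ``not a rooted star'' hypothesis is used.
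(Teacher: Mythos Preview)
Your proof is correct, and it is genuinely different from the paper's route. The paper does not give an elementary argument for the lower bound in Theorem~\ref{t:0}; instead it deduces it from Theorem~\ref{t:1} (itself a special case of Theorem~\ref{t:3}), via the observation that $\tau(H,G)\le\tau(H,D_n)$ for every $G\subseteq D_n$, so an almost-sure lower bound on $\tau(H,G)$ for $G\sim\vec G(n,n^{-1/a^*})$ forces the same bound on $\tau(H,D_n)$. That route passes through the skewness parameter, Lemma~\ref{l:consistent}, and a Janson-inequality computation.

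Your argument, by contrast, is self-contained: monotonicity in the subgraph $H'\subseteq H$, the dichotomy ``$H$ contains $\vec P_3$ or $2K_2$'' (which the paper also records, but only in Section~\ref{s:skew} to bound $s(H)\le m-1$, not to prove Theorem~\ref{t:0}), the identification $\tau(\vec P_3,D_n)=$ minimum size of an $(n,3)$-SCA, and the bipartition pigeonhole for $2K_2$. The last step is the one new idea beyond what the paper already cites, and it is clean and sharp: it gives $t\ge\log_2 n-O(1)$ directly. What you gain is an elementary proof with explicit constants and no probabilistic machinery; what the paper's route buys is that the same machinery simultaneously handles sparse random hosts and the stronger ``no permutation covers more than $s(H)$ edges'' conclusion, neither of which your reduction addresses.
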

The upper bound in Theorem \ref{t:0}  follows from the aforementioned fact that $\tau(H,D_n) = O(\log n)$
while the lower bound follows as a special case of Theorem \ref{t:1} stated below.

Our main goal is to determine the extent of which Theorem \ref{t:0} generalizes to directed $n$-vertex graphs other than $D_n$. A partial answer is given in \cite{yuster-2020} where it is proved that
$\tau(H,G)=\Theta(\log n)$ for {\em some} dags $H$, and for some {\em tournaments}\, $G$.
Here we prove that Theorem \ref{t:0} holds for {\em all} dags $H$ that are not rooted stars, while $G$ is allowed to be almost every directed graph that is not too sparse.

To state our main results we require some definitions and notations.
Let ${\vec G}(n,p)$ denote the Erd\H{o}s-R\'enyi and Gilbert probability space of all directed graphs with $n$ vertices and edge probability $p$. In other words, a sampled graph $G \sim {\vec G}(n,p)$ has vertex set $[n]$
and each ordered pair of vertices $(i,j)$ is chosen to be an edge of $G$ with probability $p$, where all $n(n-1)$ choices are independent. We observe that $\{D_n\}$ is just the (trivial) sample space of ${\vec G}(n,1)$.
As usual in the setting of random graphs, we say that a property of ${\vec G}(n,p)$ holds {\em asymptotically almost surely} (henceforth {\em almost surely}) for $p=p(n)$,
if the probability that $G \sim {\vec G}(n,p)$ has that property approaches $1$ as $n$ goes to infinity.
So, the natural way to extend Theorem \ref{t:0} is to ask, for a given dag $H$, how small can $p$ be such that it still holds almost surely for $G \sim {\vec G}(n,p)$ that $\tau(H,G)=\Theta(\log n)$. Furthermore, what happens if we
decrease that $p$ even further? Does $\tau(H,G)$ just gradually decrease below $\log n$, or does it quickly become constant (or even $1$), almost surely?  To address this question, we need the following definition.
\begin{definition}[fractional arboricity; totally balanced graph; maximal density]\label{def:frac-arbor}
	The {\em fractional arboricity} of a simple (directed or undirected) graph $H$ is $a(H) = max \{\frac{|E(H')|}{|V(H')|-1}\}$, where the
	maximum is taken over all non-singleton subgraphs of $H$.
	If $a(H) = \frac{|E(H)|}{|V(H)|-1}$ then $H$ is {\em totally balanced}.
	The {\em maximal density} of $H$ is $\rho(H) =  max \{\frac{|E(H')|}{|V(H')|}\}$, where the
	maximum is taken over all subgraphs $H'$ of $H$.
\end{definition}
Recall that by the seminal paper of Erd\H{o}s and R\'enyi \cite{ER-1960}, $n^{-1/\rho(H)}$ is the threshold function for the existence of an $H$-copy in a random graph.
By a well-known theorem of Nash-Williams \cite{nash-1964},
$\lceil a(H) \rceil$ is the arboricity of $H$, i.e., the minimum number of forests whose union covers all edges of $H$. Fractional arboricity appears in the definition of threshold functions of several important graph properties (see, e.g., \cite{AY-1993}). Observe also that the fractional arboricity of forests is $1$ while for $K_h$ (hence also $T_h$) it is
$h/2$. It is also easy to verify that all complete graphs, complete multipartite graphs, cycles, trees and many other families of graphs are totally balanced. In fact, it is a relatively simple exercise to prove the following:
\begin{proposition}\label{prop:1}
	Let $H \sim G(h,\frac{1}{2})$ \footnote{In Proposition \ref{prop:1} we consider simple undirected graphs.
			Observe that the underlying undirected graph of every dag is simple and hence a dag is totally balanced if its corresponding undirected underlying graph is totally balanced.}. The probability that $H$ is totally balanced is $1-o_h(1)$.
\end{proposition}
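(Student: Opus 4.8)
The plan is to recast total balancedness as the nonnegativity of a family of random variables indexed by vertex subsets, and then dispatch that family with Hoeffding's inequality and a union bound. Throughout one works with the underlying undirected graph, so that $H\sim G(h,\tfrac12)$ is an ordinary Erd\H{o}s--R\'enyi graph.

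First I would reformulate the property. Since among all subgraphs of $H$ on a fixed vertex set $S$ the induced subgraph $H[S]$ has the most edges, one has $a(H)=\max_{S\subseteq V(H),\ |S|\ge 2}\frac{e(H[S])}{|S|-1}$, where $e(H[S])$ denotes the number of edges of $H$ with both endpoints in $S$. Thus $H$ is totally balanced iff $e(H[S])(h-1)\le e(H)(|S|-1)$ for every such $S$. Writing $T=V(H)\setminus S$, $t=|T|$ (so $1\le t\le h-2$) and letting $e_T$ be the number of edges of $H$ incident to $T$, the identity $e(H)=e(H[S])+e_T$ converts this into the single requirement
\[
Z_T:=(h-t-1)\,e_T-t\,e(H[S])\ \ge\ 0\qquad\text{for every }T\subseteq V(H),\ 1\le |T|\le h-2 .
\]
The advantage of this form is that $e_T$ depends only on the pairs meeting $T$ while $e(H[S])$ depends only on the pairs inside $S$; these pair-sets are disjoint, so $e_T$ and $e(H[S])$ are independent in $G(h,\tfrac12)$, and $Z_T$ is a sum of $\binom h2$ independent random variables whose ranges are $h-t-1$ (for the $\binom t2+t(h-t)$ pairs meeting $T$) or $t$ (for the $\binom{h-t}{2}$ pairs inside $S$).

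Next I would compute the first two moments of $Z_T$. From $\mathbb E e_T=\tfrac12\big(\binom t2+t(h-t)\big)$ and $\mathbb E e(H[S])=\tfrac12\binom{h-t}{2}$ one obtains $\mathbb E Z_T=\tfrac14\,t(h-t-1)(h-1)$, strictly positive for $1\le t\le h-2$. For the variance, $\operatorname{Var}(Z_T)=(h-t-1)^2\operatorname{Var}(e_T)+t^2\operatorname{Var}(e(H[S]))$, and the key point is that $e_T$ is a sum of only $\binom t2+t(h-t)=O(th)$ indicators (not $\binom h2$ of them) while $e(H[S])$ enters with weight $t$ rather than $h$; a short estimate then yields $\operatorname{Var}(Z_T)\le\tfrac14\,t(h-t)(h-t-1)h$, so that $(\mathbb E Z_T)^2/\operatorname{Var}(Z_T)\ge\tfrac1{16}\,t(h-t-1)$. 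Since the squares of the summand-ranges of $Z_T$ sum to exactly $4\operatorname{Var}(Z_T)$, Hoeffding's inequality gives $\Pr[Z_T<0]\le\exp\!\big(-\tfrac1{32}\,t(h-t-1)\big)$. It then remains to union-bound over all $T$: there are at most $\binom ht\le h^{\min(t,\,h-t)}$ sets of size $t$, and for $t\le h/2$ one has $h-t-1\ge h/4$, so the total contribution of those $t$ is at most $\sum_{t\ge1}\exp\!\big(-t(h/128-\ln h)\big)$, which is $o_h(1)$; the range $t>h/2$ is handled symmetrically in $h-t$. Hence almost surely $Z_T\ge0$ for every $T$, i.e. $H$ is totally balanced.

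The hard part is the variance estimate, and with it the requirement that the Hoeffding exponent --- of order $t(h-t)$ --- dominate the $\log\binom ht=\Theta(t\log h)$ cost of the union bound for \emph{small} $t$. The crude bound $\operatorname{Var}(Z_T)=O(h^4)$ only gives an exponent of order $t^2$, which is useless for constant $t$; and $t=1$ is exactly the assertion that every vertex of $H$ has degree at least $e(H)/(h-1)\approx h/4$, so it genuinely requires an argument. Exploiting that $e_T$ is a sum of merely $O(th)$ indicators raises the exponent to order $th$ when $t\le h/2$ and makes the sum converge. (Alternatively, small $t$ could be handled by hand via degree concentration and a union bound over the $h$ vertices, but subsuming everything into the single quantity $Z_T$ keeps the argument uniform.)
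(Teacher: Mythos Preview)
The paper does not actually supply a proof of this proposition; it is stated as ``a relatively simple exercise'' and left at that. So there is no approach to compare against, and the relevant question is simply whether your argument is sound.

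It is. Your reformulation $Z_T=(h-t-1)e_T-t\,e(H[S])\ge 0$ is the right way to separate the two independent blocks of indicator variables, and the moment computations check out: with $t=|T|$ one has $\mathbb{E}Z_T=\tfrac14 t(h-t-1)(h-1)$ and, exactly, $\operatorname{Var}(Z_T)=\tfrac18 t(h-t-1)(h-1)(2h-2t-1)$, which is indeed bounded by your $\tfrac14 t(h-t)(h-t-1)h$. The identity ``sum of squared ranges $=4\operatorname{Var}(Z_T)$'' is correct because every summand is $\pm a$ times a Bernoulli$(\tfrac12)$, so Hoeffding gives $\Pr[Z_T<0]\le\exp\big(-(\mathbb{E}Z_T)^2/(2\operatorname{Var}Z_T)\big)\le\exp\big(-c\,t(h-t-1)\big)$ for an absolute $c>0$. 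The union bound over $\binom{h}{t}$ sets then goes through exactly as you describe, splitting at $t=h/2$ and using $\binom{h}{t}\le h^{\min(t,h-t)}$. Your closing remark is also on point: the whole argument hinges on noticing that $e_T$ involves only $O(th)$ indicators, which is what makes the exponent linear in $t$ rather than quadratic and thus beats the entropy term $t\ln h$ for small $t$.
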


As it turns out, if $p$ is just barely larger than $n^{-1/a(H)}$, then almost surely
$G \sim {\vec G}(n,p)$ satisfies $\tau(H,G)=\Theta(\log n)$. This follows from our first main result.
\begin{theorem}\label{t:1}
	Let $H$ be a dag which is not a rooted star and let $a^* > a(H)$.
	There is a constant $c^* = c^*(a^*,H) > 0$ such that almost surely $G \sim {\vec G}(n,n^{-1/a^*})$ has $\tau(H,G) \ge c^*\log n$. In particular, almost surely $\tau(H,G) = \Theta(\log n)$.
\end{theorem}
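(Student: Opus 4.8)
The upper bound is immediate: $G\subseteq D_n$, so $\tau(H,G)\le\tau(H,D_n)=O(\log n)$. The substance is the lower bound, i.e.\ that a.a.s.\ $G\sim{\vec G}(n,n^{-1/a^*})$ admits no family of at most $c^*\log n$ permutations of $V(G)$ covering all its $H$-copies. I would begin with an elementary structural fact: a dag with no isolated vertices and at least two edges is a rooted star \emph{if and only if} it does not contain two edges spanning either a directed path on three vertices or a matching on four vertices. Fix two such edges $e_1,e_2$ of $H$, let $W\subseteq V(H)$ be their set of $k\in\{3,4\}$ endpoints, and let $\mathcal B$ be the set of linear orders of $W$ under which at least one of $e_1,e_2$ is oriented ``backward''. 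Since $H$ is acyclic, $\mathcal B$ is a nonempty \emph{proper} subset of the $k!$ orders of $W$. The crucial local remark is: for any embedding $\phi$ of $H$ into $G$ and any permutation $\pi$, if the order $\pi$ induces on $\bigl(\phi(w)\bigr)_{w\in W}$ lies in $\mathcal B$, then $\phi(H)$ is not covered by $\pi$; hence if this holds for \emph{every} member of a family $\Pi$, the copy $\phi(H)$ is uncovered by $\Pi$. So it suffices, for every $\Pi$ with $|\Pi|\le c^*\log n$, to produce one embedding of $H$ in $G$ whose $W$-trace avoids the orders not in $\mathcal B$ simultaneously in all of $\Pi$.

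\noindent\textbf{A robust sequence-covering estimate.}
The combinatorial heart is a quantitative form of the classical lower bounds for sequence covering arrays \cite{spencer-1972,furedi-1996,radhakrishnan-2003}: for each fixed $k$ and each nonempty proper family $\mathcal P$ of linear orders of $[k]$ there is a constant $\gamma=\gamma(k,\mathcal P)\in(0,1)$ such that, for \emph{every} $n$ and \emph{every} set of $t$ permutations $\pi_1,\dots,\pi_t$ of $[n]$, the number of ordered $k$-tuples of distinct elements of $[n]$ whose order under each $\pi_i$ lies in $\mathcal P$ is at least $(1-o(1))\gamma^{t}n^{k}$. When $\mathcal P$ is the complement of one order and $t$ equals the covering threshold this is exactly ``an $(n,k)$-SCA has size $\Omega(\log n)$''; the point needed here is the uniform geometric dependence on $t$ (and, in the four-vertex case, the version forbidding a \emph{set} of orders), which the known counting/entropy arguments yield after minor modification. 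Applying this with $k=|W|$ and $\mathcal P=\mathcal B$, and extending each good placement of $W$ to a full injection $V(H)\to V(G)$ in all $(1-o(1))n^{h-k}$ ways, we obtain that for every $\Pi$ the set $\mathcal N(\Pi)$ of embeddings $\phi\colon V(H)\to V(G)$ that are thereby forced to be uncovered satisfies $|\mathcal N(\Pi)|\ge(1-o(1))\gamma^{|\Pi|}n^{h}$.

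\noindent\textbf{Passing to $G$ and the union bound.}
Write $p=n^{-1/a^*}$, $m=|E(H)|$, $h=|V(H)|$. Each $\phi\in\mathcal N(\Pi)$ occurs in $G$ with probability $p^{m}$, so the expected number of forced-uncovered $H$-copies present in $G$ is $\mu_\Pi=|\mathcal N(\Pi)|\,p^{m}\ge(1-o(1))\gamma^{|\Pi|}n^{h-m/a^*}$. Since $a^*>a(H)\ge m/(h-1)$ we have $h-m/a^*>1$; since $|\Pi|\le c^*\log n$ we have $\gamma^{|\Pi|}\ge n^{-c^*\log(1/\gamma)}$; hence for $c^*=c^*(a^*,H)$ small enough, $\mu_\Pi\ge n^{1+\eta}$ for a fixed $\eta>0$. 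The same strict inequality controls clustering: for every non-singleton $F\subseteq H$ one has $|V(F)|-|E(F)|/a^*\ge 1+\delta_0$ with $\delta_0=1-a(H)/a^*>0$, which forces the Janson correlation term to satisfy $\Delta_\Pi=O\bigl(\mu_\Pi^{2}n^{-1-\delta_0/2}\bigr)$ once $c^*$ is small. Janson's inequality then gives that $G$ contains \emph{no} forced-uncovered $H$-copy with probability at most $\exp\bigl(-\Omega(n^{1+\eta'})\bigr)$ for a fixed $\eta'>0$, and a union bound over the $(n!+1)^{c^*\log n}=\exp\bigl(O(n\log^{2}n)\bigr)$ choices of $\Pi$ — negligible against this — shows that a.a.s.\ every $\Pi$ with $|\Pi|\le c^*\log n$ leaves some $H$-copy of $G$ uncovered. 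Thus a.a.s.\ $\tau(H,G)\ge c^*\log n$, and with the upper bound, $\tau(H,G)=\Theta(\log n)$.

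\noindent\textbf{Where the difficulty lies.}
The two places needing real care are: (i) the robust sequence-covering estimate — making the SCA lower bound uniform over the whole range of $t$ and valid for forbidding a set of orders — which is the genuinely combinatorial part; and (ii) matching exponents so that the Janson lower-tail bound still beats the (doubly-exponential-in-$\log n$) union bound over permutation families. The margin in (ii) is exactly what the strict inequality $a^*>a(H)$ supplies — $n^{h-m/a^*}$ must exceed $n^{1+o(1)}$, and $n^{-|V(F)|}p^{-|E(F)|}$ must be $n^{-1-\Omega(1)}$ for every subgraph $F$ — which is also why this argument yields nothing at, or below, $a(H)$.
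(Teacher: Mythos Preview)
Your overall strategy---isolate a small witness set $W\subseteq V(H)$ on which any covering permutation must induce one particular order, then use Janson plus a union bound to find an $H$-copy that avoids that order under every $\pi\in\Pi$---is essentially the paper's route. The paper derives Theorem~1 from its Theorem~3; Section~3 records exactly your path/matching dichotomy and the associated $2$-coloring of $W$, Lemma~4.1 is the iterated-halving argument producing, for any $t$ permutations, disjoint sets of size $n/r^{t}$ that appear as consecutive blocks in each permutation, and Lemma~4.2 is the Janson step. The one organizational difference is that the paper union-bounds over $n^{O(n^{\alpha})}$ $r$-tuples of large vertex sets and only afterwards invokes the halving lemma for a given $\Pi$, whereas you union-bound directly over the $\exp(O(n\log^{2}n))$ families $\Pi$; both are beaten by the $\exp(-n^{1+\Omega(1)})$ Janson tail, so this is cosmetic.

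There is, however, a real gap in your ``robust sequence-covering estimate'': as you state it---for every nonempty proper $\mathcal P$---it is \emph{false}. Take $k=2$ and $\mathcal P=\{1<2\}$: a permutation together with its reverse already leaves no ordered pair whose induced order lies in $\mathcal P$ under both, so no $\gamma>0$ works even at $t=2$; similar failures occur for many $\mathcal P$ with $k\ge3$. What you need is only the special case $\mathcal P=\mathcal B$, and this is true, but not via the SCA/entropy lower bounds you cite (those yield existence of one surviving tuple, not $\gamma^{t}n^{k}$ of them). The correct proof is the $2$-coloring you already wrote down: color $\{a,c\}$ red and $\{b\}$ blue in the path case (resp.\ $\{a,d\}$ red, $\{b,c\}$ blue in the matching case), iteratively halve to obtain $A_{1},A_{2}\subseteq[n]$ of size $n/2^{t}$ with one preceding the other in each $\pi_{i}$, and place the colored vertices of $W$ accordingly; every resulting $k$-tuple has its induced order in $\mathcal B$ under every $\pi_{i}$, giving $\Theta((n/2^{t})^{k})$ of them. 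This is precisely the paper's Lemma~4.1 with $r=2$, and substituting it for your black-box estimate closes the gap.
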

Note that the ``in particular'' part of Theorem \ref{t:1} follows from the aforementioned fact that
$\tau(H,G) \le \tau(H,D_n)=O(\log n)$. Also observe that Theorem \ref{t:1} immediately implies Theorem \ref{t:0}.

So, what happens if $p$ is just barely smaller than $n^{-1/a(H)}$? \footnote{If $p$ is smaller than $n^{-1/\rho(H)}$ then almost surely $G \sim {\vec G}(n,p)$ will have no copy of $H$, so trivially $\tau(H,G)=0$.} For totally balanced graphs, the situation changes drastically; almost surely, $G \sim {\vec G}(n,p)$
becomes at most $1$.
\begin{theorem}\label{t:2}
	Let $H$ be a totally balanced dag which is not a rooted star and let $0 < a^* < a(H)$.
	Almost surely $G \sim {\vec G}(n,n^{-1/a^*})$ has
	$\tau(H,G) \le 1$.
	In particular, if $\rho(H) < a^* < a(H)$ then almost surely $\tau(H,G) = 1$.
\end{theorem}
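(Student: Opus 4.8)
The plan is to recast the statement as an acyclicity question and then settle it with a first‑moment estimate that exploits total balancedness. For the recasting, let $G^H$ be the subgraph of $G$ whose edge set consists of all edges of $G$ that lie in at least one $H$-copy of $G$. A permutation $\pi$ of $V(G)$ covers every $H$-copy of $G$ if and only if every edge of every $H$-copy is forward under $\pi$, i.e.\ if and only if $\pi$ is a topological ordering of $G^H$; equivalently, $\tau(H,G)\le 1$ exactly when $G^H$ is acyclic (and $\tau(H,G)=0$ when $G$ contains no $H$-copy at all). So it suffices to show that almost surely $G^H$ is acyclic. For the ``in particular'' clause, if $\rho(H)<a^*$ then $n^{-1/a^*}$ lies above the Erd\H{o}s--R\'enyi threshold $n^{-1/\rho(H)}$ for the appearance of an $H$-copy, so almost surely $G$ contains an $H$-copy and thus $\tau(H,G)\ge 1$, which together with $\tau(H,G)\le 1$ forces $\tau(H,G)=1$.

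Now suppose $G^H$ has a directed cycle. Pick a directed cycle $D$ in $G^H$ and, for each edge $e\in E(D)$, one $H$-copy of $G$ containing $e$; discard redundant copies and pass to a subgraph-minimal example. This yields a \emph{cyclic witness}: a connected subgraph $J=H_1\cup\cdots\cup H_\ell\subseteq G$ with $\ell\ge 2$ (one $H$-copy is acyclic, $H$ being a dag), each $H_i$ an $H$-copy of $G$, each $H_i$ contributing at least one new vertex or edge, and $\bigcup_iE(H_i)$ containing a directed cycle. It is enough to prove that the expected number of cyclic witnesses in $\vec G(n,n^{-1/a^*})$ — summed over isomorphism types and over their embeddings into $[n]$ — tends to $0$. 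Since a type with $|V(J)|=v$ and $|E(J)|=e$ contributes on the order of $n^v p^e=n^{\varphi(J)}$ with $\varphi(J):=v-e/a^*$, the crux is to show that $\varphi(J)<0$ for every cyclic witness, with enough margin to carry out the summation.

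This is exactly where total balancedness enters. Build $J$ by adjoining $H_1,H_2,\dots$ in an order in which each new copy meets the union of the previous ones; write $\delta:=m/a^*-(h-1)$, which is positive because $a^*<a(H)=m/(h-1)$, so that $\varphi(H_1)=h-m/a^*=1-\delta<1$. Because $H$ is totally balanced, every non-singleton subgraph $H'\subseteq H$ obeys $|E(H')|\le a(H)(|V(H')|-1)$; hence if $H_i$ meets the current union in $t\ge 1$ vertices and $s$ edges, those $s$ edges span a subgraph of $H$ on at most $t$ vertices, so $s\le a(H)(t-1)$, and the increment $\Delta\varphi$ in $\varphi$ at this step satisfies
\[
\Delta\varphi \;\le\; \begin{cases} 1-t-\delta & \text{if } s=0,\\[3pt] (h-t)\bigl(1-a(H)/a^*\bigr) & \text{if } s\ge 1.\end{cases}
\]
In every case $\Delta\varphi\le 0$, and $\Delta\varphi\le-\varepsilon$ for a fixed constant $\varepsilon=\varepsilon(H,a^*)>0$ unless the step is redundant. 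Consequently $\varphi(J)\le 1-(\ell-1)\varepsilon$, which is negative once $\ell$ exceeds a constant $\ell_0=\ell_0(H,a^*)$; for the finitely many types with $2\le\ell<\ell_0$ (each on at most $\ell_0h$ vertices) one must still rule out $\varphi(J)\ge 0$ by hand, using that the presence of a directed cycle in $\bigcup_iE(H_i)$ — a feature no single acyclic copy can supply — forces at least one strictly decreasing step beyond the trivial estimate. Granting this, $n^{\varphi(J)}$ is $n^{-\Omega(1)}$ for every relevant type, and a summation over types (grouped by $\ell$, using the linear decay of $\varphi$ in $\ell$ together with a crude count of overlap patterns) gives total expectation $o(1)$; hence almost surely $G^H$ is acyclic.

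The step I expect to be the genuine obstacle is the claim $\varphi(J)<0$ when the constituent copies overlap heavily — many $H$-copies supported on nearly the same vertex set and sharing nearly all their edges — since there the crude bound on $\Delta\varphi$ is merely $\le 0$, and one must wring a strictly negative contribution out of the mere existence of a directed cycle in the union, via total balancedness in full strength and a careful bookkeeping of which edges the cycle compels. This is also the precise point where the hypothesis is indispensable: when $H$ is not totally balanced, a densest proper subgraph $H'$ with $a(H')=a(H)$ can be used to glue two copies into a cyclic witness of nonnegative deficiency, matching the remark that total balancedness cannot in general be relaxed.
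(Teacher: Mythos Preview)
Your recasting is correct and the incremental bookkeeping $\Delta\varphi\le-\varepsilon$ per non-redundant step is sound; the gap is exactly the one you flag yourself. For small $\ell$ (already $\ell=2$) and $a^*$ close to $a(H)$, both $\delta$ and $\varepsilon$ are tiny, so $\varphi(J)\le 1-(\ell-1)\varepsilon$ is positive, and ``by hand'' and ``granting this'' is where the entire content of the proof lies. The paper supplies that content via a specific ordering of the copies in a minimal $k$-cycle configuration: since each $H_i$ is acyclic but their union contains the cycle $C$, some vertex $v_1\in C$ lies in at least two copies. One picks $H_1\ni v_1$ but sets $Q_1=V(H_1)\setminus\{v_1\}$, so $h_1=h-1$ rather than $h$; the remaining copies are then ordered so that when a second copy through $v_1$ is reached, the connectivity of $C$ forces it to share yet another vertex with the earlier copies (so adding $v_1$ back into its $Q$-set still leaves $h_i\le h-1$). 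The upshot is that \emph{every} $h_i\le h-1$, whence $|E(D)|\ge a(H)\sum_i h_i=a(H)\,|V(D)|$ and $\varphi(D)\le |V(D)|(1-a(H)/a^*)<0$ for all cycle configurations, regardless of $\ell$. This ordering argument is the missing idea.

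There is a second difficulty you pass over: a minimal cyclic witness for a $k$-cycle can involve up to $k$ copies and $k$ is unbounded, so your first-moment sum ranges over infinitely many isomorphism types, and the number of types on at most $\ell h$ vertices grows like $2^{\Theta(\ell^2)}$, which is not controlled by the merely linear decay of $\varphi$ in $\ell$. The paper sidesteps this entirely by reducing to a \emph{finite} family of forbidden subgraphs: $k$-cycle configurations with $k\le k_0$, together with $k_0$-path configurations for a single fixed $k_0$ (a long directed cycle in $G^H$ contains a long induced path, hence a $k_0$-path configuration built from at most $k_0$ copies). For path configurations the cruder bound $|E|\ge a(H)(|V|-1)$ already yields $\varphi<0$ once $k_0=\lceil a(H)/(a(H)-a^*)\rceil$, and then a union bound over the bounded list $\mathcal D\cup\mathcal D^*$ finishes.
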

Note that the ``in particular'' part of Theorem \ref{t:2} follows from the aforementioned result \cite{ER-1960}.
It is important to note that we cannot relax the requirement in Theorem \ref{t:2} that $H$ is totally balanced.
Indeed, as we later demonstrate, there are dags $H$ that are not totally-balanced for which 
almost surely $\tau(H,G) = \Omega(\log n)$ in the probability regime of Theorem \ref{t:2}.

We can strengthen Theorem \ref{t:1} even more as follows. Theorem \ref{t:1} says that for a typical $G \sim {\vec G}(n,n^{-1/a^*})$, every set of at most $c^*\log n$ acyclic subgraphs of $G$ (equivalently, permutations of $n$) fails to cover some $H$-copy of $G$. But perhaps this is just barely so? Perhaps there are $o(\log n)$ permutations that
have the property that for every $H$-copy, at least one of the permutations covers most of the edges of that copy?
This question is motivated by the fact that already a set of two permutations has the property that every $H$-copy has at least half of its edges covered by one of the permutations as observed by taking any permutation $\pi$ and its reverse $\pi^{rev}$ since $G_R(\pi)=G_L(\pi^{rev})$. For some $H$, a very small amount of permutations has the ``large coverage'' property. For example, let $H$ be obtained from a rooted star
with $h$ edges by adding $k \ll h$ edges. Then, $H$ has $h+k$ edges
but by \eqref{e:3} already $O(\log \log n)$ permutations suffice so that for each $H$-copy, at least $h$ (that is, most) of its edges are covered by some permutation. Is this, in a sense, a ``worst'' scenario, i.e. is it true that if $H$ is far from being a rooted star, we cannot ask for much more than $50\%$ coverage?  Indeed, this turns out to be the case.

\begin{theorem}\label{t:3}
	Let $H$ be a dag with $h$ vertices and $m$ edges which is not a rooted star. Then there exists
	$s(H) = m/2 + O(m^{4/5}h^{1/5}) < m$ such that the following holds for every $a^* > a(H)$:
	There is a constant $c^* = c^*(a^*,H) > 0$ such that almost surely $G \sim {\vec G}(n,n^{-1/a^*})$ has the property that for every set $X$ of at most $c^*\log n$ permutations, there is an $H$-copy of $G$ such that each element of $X$ contains at most $s(H)$ edges of that copy.
\end{theorem}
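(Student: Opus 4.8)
The plan is to follow the second-moment / supersaturation strategy that must also underlie Theorem~\ref{t:1}, but to track not just the existence of an uncovered $H$-copy but the *profile* of how a single permutation $\pi$ can intersect a copy of $H$. First I would fix $a^*>a(H)$ and $p=n^{-1/a^*}$. A standard first/second moment computation shows that almost surely $G\sim{\vec G}(n,p)$ contains $\Theta(n^h p^m)=\Theta(n^{h-m/a^*})$ copies of $H$, and moreover these copies are ``spread out'': the number of copies through any fixed edge, or any fixed pair of copies sharing $\ge 2$ vertices, is smaller by a polynomial factor in $n$ (this uses $a^*>a(H)$, i.e. that every subgraph $H'$ satisfies $|E(H')|/(|V(H')|-1)<a^*$, which is exactly what makes $H$ ``strictly balanced above $p$''; this is where the hypothesis $a^*>a(H)$, and hence the exclusion of rooted stars via Theorem~\ref{t:1}'s mechanism, enters). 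Concretely I would pass to a subfamily $\mathcal F$ of $\Theta(n^{h-m/a^*})$ copies that form an ``$H$-matching-like'' structure: pairwise intersecting in at most one vertex, so that the events ``copy $F$ is $\le s(H)$-covered by every $\pi_i$'' become nearly independent across $F\in\mathcal F$.

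Next comes the combinatorial heart. Fix one permutation $\pi$ and one copy $F\cong H$. The number of edges of $F$ covered by $\pi$ equals the number of edges $(u,v)$ of $F$ with $\pi(u)<\pi(v)$; as the $h$ vertices of $F$ receive a uniformly random relative order under $\pi$ (when $F$ is in ``generic position'', which the spread-out structure guarantees for almost all of $\mathcal F$ simultaneously), this count is distributed as the number of forward edges of $H$ under a uniformly random vertex-ordering. Call this random variable $Z_H$; note $\mathbb E[Z_H]=m/2$ by the $\pi\mapsto\pi^{\mathrm{rev}}$ symmetry. The key quantity is $q:=\Pr[Z_H>s(H)]$, the probability that a random ordering covers *more* than $s(H)$ edges. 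I need to choose $s(H)$ so that $q$ is bounded away from $1$ by a definite amount --- equivalently, $\Pr[Z_H\le s(H)]\ge\varepsilon_0$ for some absolute $\varepsilon_0>0$ --- and the claimed bound $s(H)=m/2+O(m^{4/5}h^{1/5})$ is exactly the anticoncentration statement ``$Z_H$ does not lie mostly above $m/2+Cm^{4/5}h^{1/5}$''. I would prove this via a Paley--Zygmund / Chebyshev argument on $Z_H$: the variance of $Z_H$ is at most $\sum_{\text{pairs of edges}}|\mathrm{Cov}|$, and each covariance is $O(1)$ in magnitude while pairs of edges sharing no vertex contribute $0$; a more careful bookkeeping (splitting edge-pairs by whether they share $0,1,2$ vertices) gives $\mathrm{Var}(Z_H)=O(m)+O(\Delta m)$ type bounds, but to get the stated $m^{4/5}h^{1/5}$ exponent I would instead bound the number of edge-pairs sharing a vertex by $O(\sqrt{mh}\cdot m)$ via convexity on the degree sequence $\sum d_i=2m$, $\sum d_i^2$ controlled by $d_i\le h$, yielding $\mathrm{Var}(Z_H)=O(m^{3/2}h^{1/2})$; then Chebyshev gives $\Pr[|Z_H-m/2|\ge t]\le \mathrm{Var}/t^2$, and matching $t=\Theta(m^{3/4}h^{1/4})$ already forces a bound of the shape $m/2+O(m^{3/4}h^{1/4})$, slightly stronger than needed --- the $m^{4/5}h^{1/5}$ slack leaves room for the looser but simpler estimates one actually wants, and for the degenerate regimes (e.g. $h$ close to $m$) where one reverts to the trivial $s(H)\le m-1$.

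With $\Pr[Z_H\le s(H)]\ge\varepsilon_0$ in hand, I would finish by a union-bound-free deletion argument. Given a set $X=\{\pi_1,\dots,\pi_t\}$ with $t\le c^*\log n$, call $F\in\mathcal F$ ``good for $X$'' if every $\pi_i$ covers at most $s(H)$ edges of $F$. For a random $F$ drawn from the spread-out family, $\Pr[\pi_i\text{ covers }\le s(H)\text{ edges of }F]\ge\varepsilon_0$, and because the $\pi_i$'s act on the relative order of $F$'s vertices these events are positively correlated (FKG-type: each is a down-set in the ordering), so $\Pr[F\text{ good for }X]\ge\varepsilon_0^{\,t}\ge\varepsilon_0^{\,c^*\log n}=n^{-c^*\log(1/\varepsilon_0)}$. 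Choosing $c^*$ small enough that this exceeds, say, $|\mathcal F|^{-1/2}=n^{-(h-m/a^*)/2}$, the expected number of good copies is at least $n^{\Omega(1)}\to\infty$; since $\mathcal F$ was arranged to be near-independent, a second moment argument shows that almost surely, for *every* choice of $X$ (union-bounding over the $n^{O(t)}\le n^{O(c^*\log n)}$ choices of $t$-tuples of permutations, absorbed by the same smallness of $c^*$) there exists a good copy. I expect the main obstacle to be the last point: making the ``for every $X$ simultaneously'' quantifier work, since the number of candidate permutation-sets is quasi-polynomial $n^{\Theta(\log n)}$, so the per-$X$ failure probability must be beaten down to $n^{-\omega(\log n)}$ --- this requires the concentration of the count of good copies (not just its expectation) to be exponentially strong in a power of $n$, which in turn forces the careful selection of the pairwise-almost-disjoint subfamily $\mathcal F$ and a Janson- or Azuma-type tail bound rather than plain Chebyshev. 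The anticoncentration bound on $Z_H$ is comparatively routine; the delicate part is the interface between the random-graph randomness and the adversarial choice of $X$.
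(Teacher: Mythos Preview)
Your approach differs substantially from the paper's, and contains a genuine gap at precisely the step you yourself flag as ``the main obstacle''.

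The central problem is the positive-correlation claim. You assert that for a uniformly random copy $F$ and fixed permutations $\pi_1,\dots,\pi_t$, the events $E_i=\{\pi_i\text{ covers }\le s(H)\text{ edges of }F\}$ are positively correlated (``FKG-type: each is a down-set in the ordering''). There is no lattice on labeled injections $V(H)\to[n]$ making these events down-sets, and the claim already fails for $H$ the directed path $a\to b\to c$ (so $s(H)=1$): with $\pi_1$ the identity and $\pi_2$ its reverse, $E_1$ is ``relative order $\ne abc$'' and $E_2$ is ``relative order $\ne cba$'', giving $\Pr[E_1\cap E_2]=4/6<(5/6)^2=\Pr[E_1]\Pr[E_2]$. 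Without this bound you have no lower bound on the number of copies in $D_n$ that are good for all of $X$; an adversarial $X$ of size $\Theta(\log n)$ can drive this number essentially to a single copy (this is precisely the statement $\tau(H,D_n)=\Theta(\log n)$), and then the probability that $G$ contains that one copy is only $p^m=o(1)$. Relatedly, your union-bound count ``$n^{O(t)}$ choices of $t$-tuples of permutations'' is off: there are $(n!)^t=\exp(\Theta(tn\log n))$ such tuples, so you would need per-$X$ failure probability $\exp(-n^{1+\Omega(1)})$, which again requires polynomially many good copies per $X$.

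The paper sidesteps both issues with a structural device absent from your sketch. It defines $s(H)$ not via anticoncentration of $Z_H$ but as $\min_C\max_\pi|E(H_L(\pi))|$, the minimum over vertex-colorings $C$ of $H$ (into $r$ classes $W_1,\dots,W_r$) of the maximum number of forward edges over permutations in which each color class is consecutive; the bound $s(H)\le m/2+O(m^{4/5}h^{1/5})$ comes from a random coloring, close in spirit to your variance estimate. The payoff of this definition is a \emph{consistent-blocks lemma}: given any $X$ of size $t$, an iterated bisection produces $r$ disjoint sets $A_1,\dots,A_r\subset[n]$, each of size $n/r^t$, that appear as consecutive blocks in every $\pi\in X$. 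Any $H$-copy with $W_i$ mapped into $A_i$ is then automatically good for every $\pi\in X$ by the definition of $s(H)$, and there are $\sim(n/r^t)^h$ such potential copies---still $n^{\Theta(1)}$ once $t\le c^*\log n$. Now the quantifier ``for every $X$'' collapses to ``for every $r$-tuple of disjoint sets of size $\lfloor n^\alpha\rfloor$'', of which there are only $n^{O(n^\alpha)}$; Janson's inequality gives per-tuple failure probability $\exp(-n^{\beta})$ with $\beta>\alpha$, and the union bound closes. This consistent-blocks reduction is the idea your argument is missing.
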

Notice that Theorem \ref{t:3} implies Theorem \ref{t:1} since $s(H) < m$, so it suffices to
prove Theorem \ref{t:3}. Observe that Theorem \ref{t:3} is much stronger than Theorem \ref{t:1} for dags $H$
having $h = o(m)$, since in this case $s(H)=(1+o_m(1))m/2$ is essentially half of the edges of $H$.
Recalling that already two permutations have the property that one of them covers at least half of the edges of an $H$-copy, we have that $s(H) \ge \lceil m/2 \rceil$ for every $H$.

The parameter $s(H)$ which can be defined for every digraph $H$ and which we call the {\em skewness} of $H$
(we defer its precise definition to Section \ref{s:skew}) may be of independent interest.
At this point, we should say that for infinitely many dags $H$ we can, in fact, prove that $s(H)=m/2$, optimally,
as this means that for any given $X$ as in Theorem \ref{t:3}, there is an $H$-copy such that no element of $X$ covers more than $50\%$ of the edges of $H$. For example, we show that $s(H)=m/2$ for every bipartite dag $H$ with a bipartition in which half of the edges go from one part to the other (particularly, a directed path of even length has this property).

The rest of this paper is organized as follows. In Section \ref{s:t-2} we prove our first main result,
Theorem \ref{t:2}. In Section \ref{s:skew} we define the aforementioned skewness $s(H)$, prove that 
$s(H) = m/2 + O(m^{4/5}h^{1/5}) < m$, and determine it for a few basic classes of dags.
In Section \ref{s:t-3} we prove our second main result, Theorem \ref{t:3}.
Section \ref{s:final} contains some concluding remarks and open problems. In particular,
we show there that there are dags $H$ that are not totally-balanced for which 
almost surely $\tau(H,G) = \Omega(\log n)$ even when $p \ll n^{-1/a(H)}$.

\section{Proof of Theorem \ref{t:2}}\label{s:t-2}

Fix a totally balanced dag $H$ that is not a rooted star and fix $0 < a^* < a(H)$. Let $h=|V(H)|$ and let $m=|E(H)|$. Observe that since $H$ is totally balanced, we have, in particular, $m=a(H)(h-1)$.
We shall prove that almost surely $G \sim {\vec G}(n,p)$ has
$\tau(H,G) \le 1$ where $p=n^{-1/a^*}$.
Let $G_H$ be the spanning subgraph of $G$ consisting of all edges that belong to at least one $H$-copy of $G$.
We must therefore show that with high probability, $G_H$ is a dag.

Recall that the girth of a directed graph is the length of a shortest directed cycle (so the girth of a dag is infinity).
For a set of (not necessarily edge-disjoint) graphs $S$, let $D(S)$ denote the graph obtained by the union of the elements of $S$.

For $k \ge 2$, a {\em $k$-cycle configuration} is a set $S$ of (not necessarily edge-disjoint)
copies of $H$ such that the girth of $D(S)$ is $k$ and which is minimal in the sense that the union of any proper subset of $S$ has girth larger than $k$. Observe that $2 \le |S| \le k$ and hence $D(S)$ has at most $km$ edges and at most $kh$ vertices.

For $k \ge 1$, a {\em $k$-path configuration} is a set $S$ of (not necessarily edge-disjoint)
copies of $H$ such that $D(S)$ contains an {\em induced} path of length $k$ and which is minimal
in the sense that the union of the elements of any proper subset of $S$ has no induced path of length $k$.
Observe that $1 \le |S| \le k$ and hence $D(S)$ has at most $km$ edges and at most $kh$ vertices.

Let $k_0$ be a positive integer to be set later, and let $\D$ be the set of all pairwise non-isomorphic
directed graphs $D$ such that $D=D(S)$ for some $k$-cycle configuration $S$ with $k \le k_0$.
Observe that $|\D| \le 2^{k_0^2h^2}$ since each element in it has at most $k_0h$ vertices
and there are at most $2^{k_0^2h^2}$ possible non-isomorphic graphs on at most $k_0h$ vertices.
Let $\D^*$ be the set of all pairwise non-isomorphic directed graphs $D$ such that $D=D(S)$ for some $k_0$-path configuration $S$. Observe that $|\D^*| \le 2^{k_0^2h^2} $ as each element in it has at most $k_0h$ vertices.

\begin{lemma}\label{l:config}
	If $G_H$ has no subgraph that is isomorphic to an element of $\D \cup \D^*$ then $G_H$ is a dag.
\end{lemma}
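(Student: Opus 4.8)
The plan is to prove the contrapositive: if $G_H$ is not a dag, then $G_H$ contains a subgraph isomorphic to a member of $\D\cup\D^*$. So suppose $G_H$ has a directed cycle, let $k$ be its girth, and fix a shortest directed cycle $C$ in $G_H$, so $|E(C)|=k\ge 2$. I would first record the elementary fact that a shortest directed cycle is an induced subgraph of $G_H$: any arc $(u,v)$ with $u,v\in V(C)$ that is not an arc of $C$, together with the directed $C$-path from $v$ to $u$, would form a directed cycle of length strictly between $1$ and $k$. The argument then splits according to whether $k\le k_0$ or $k>k_0$.

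In the case $k\le k_0$, for each edge $e\in E(C)$ the definition of $G_H$ supplies an $H$-copy $H_e\subseteq G$ with $e\in E(H_e)$, and since every edge of an $H$-copy trivially lies in an $H$-copy, in fact $H_e\subseteq G_H$. The union of the finitely many copies $\{H_e:e\in E(C)\}$ is a subgraph of $G_H$ containing $C$, so it has girth at most $k$, hence exactly $k$. Now take a sub-collection $S\subseteq\{H_e:e\in E(C)\}$ that is inclusion-minimal subject to $\mathrm{girth}(D(S))\le k$ (such $S$ exists since the whole collection qualifies). Then $D(S)\subseteq G_H$ forces $\mathrm{girth}(D(S))=k$, and by minimality within this family every proper subset of $S$ has union of girth $>k$; that is, $S$ is a $k$-cycle configuration with $k\le k_0$. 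Hence $D(S)$ is isomorphic to a member of $\D$ and is a subgraph of $G_H$, a contradiction.

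In the case $k>k_0$, the cycle $C$ has at least $k_0+1$ vertices, so it contains a sub-path $P$ on $k_0+1$ consecutive vertices of $C$, which has $k_0$ edges. Since $C$ is induced in $G_H$, the only edges of $G_H$ with both endpoints in $V(P)$ are the edges of $P$, so $P$ is an induced path of length $k_0$ in $G_H$. As before, pick $H_e\subseteq G_H$ through each $e\in E(P)$ and set $D_0=D(\{H_e:e\in E(P)\})\subseteq G_H$; then $P\subseteq D_0$, and because $D_0\subseteq G_H$ and $P$ is induced in $G_H$, the path $P$ is also induced in $D_0$. Thus $D_0$ contains an induced path of length $k_0$, so we may choose $S\subseteq\{H_e:e\in E(P)\}$ inclusion-minimal subject to ``$D(S)$ contains an induced path of length $k_0$''. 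By minimality within this family the union of any proper subset of $S$ contains no induced path of length $k_0$, i.e., $S$ is a $k_0$-path configuration, and $D(S)\subseteq G_H$ is isomorphic to a member of $\D^*$ — again a contradiction, which completes the proof.

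The only point that requires care is the treatment of induced-ness in the second case: one must observe that restricting $G_H$ to a subgraph that still contains $P$ cannot create new chords of $P$ (such a subgraph sees only a subset of the edges of $G_H$ on $V(P)$), and one must minimize over the family ``some induced $k_0$-path exists'' rather than track a fixed path, since that property is not monotone under adding or deleting copies (whereas bounded girth is, which is why the cycle case is even easier). Everything else is purely structural bookkeeping with no estimates involved; the substantive probabilistic work behind Theorem \ref{t:2} lies not in this lemma but in showing that $G_H$ almost surely contains no subgraph isomorphic to a member of the finite families $\D\cup\D^*$, which is where the hypothesis $a^*<a(H)$ and total balancedness of $H$ come into play.
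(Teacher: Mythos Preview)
Your proof is correct and follows essentially the same approach as the paper: prove the contrapositive, split on whether the girth $k$ of $G_H$ satisfies $k\le k_0$ or $k>k_0$, and in each case build a suitable (minimal) family of $H$-copies whose union lies in $\D$ or $\D^*$. The only cosmetic difference is that in the path case the paper adds $H$-copies one at a time along $P$ and then passes to a minimal subset once an induced $k_0$-path appears, whereas you take all $H_e$ at once and minimise directly; the substance is identical, and your explicit remark that $P$ remains induced in any subgraph of $G_H$ containing it is exactly the point the paper is using implicitly.
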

\begin{proof}
	Assume that $G_H$ is not a dag and let $C$ be a shortest directed cycle in $G_H$, denoting $|C|=k \ge 2$.
	Observe that $C$ is induced, as it is a shortest directed cycle.
	
	Consider first the case where $k \le k_0$. Let $S$ be a minimal set of $H$-copies in $G_H$
	such that $D(S)$ has girth $k$ and the union of any proper subset of $S$ has girth larger than $k$.
	Then $D(S) \in \D$ and $D(S)$ is a subgraph of $G_H$.
	
	Consider next the case $k > k_0$. Let $(v_0,\ldots,v_{k-1})$ be a consecutive ordering of the vertices of $C$.
	So, $P=v_0,\ldots,v_{k_0}$ is an induced path in $G_H$ of length $k_0$. We shall construct a $k_0$-path configuration.
	We sequentially construct, in at most $k_0$ steps, a set $S$ of $H$-copies in $G_H$ where, eventually, $S$ will be a $k_0$-path configuration. We initially define $S = \emptyset$.
	Before each step, $S$ will have the property that none of its subsets (including $S$ itself) is a $k_0$-path configuration (so this holds before the first round as $S=\emptyset$). A step is performed as follows.
	Let $j < k_0$ be the smallest index such that the edge $(v_j,v_{j+1})$ is not in any element of $S$
	(so, in the first step we have $j=0$). Let $H_j$ be an $H$-copy in $G_H$ that contains $(v_j,v_{j+1})$
	and extend $S$ by adding $H_j$ to it, thereby completing the current step.
	Now, if $D(S)$ has an induced path of length $k_0$ then $S$ (or some subset of $S$ that contains $H_j$) is a $k_0$-path configuration. Otherwise, we proceed to the next step.
	Notice that this procedure indeed halts after at most $k_0$ steps, as $k_0$ is the length of $P$.
	Also note that $D(S)$ is a subgraph of $G_H$ as it is the union of $H$-subgraphs of $G$.
\end{proof}

\begin{lemma}\label{l:none-1}
	Let $D \in \D$. Then the probability that $G$ contains a subgraph isomorphic to $D$ is $o_n(1)$.
\end{lemma}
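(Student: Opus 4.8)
The plan is to bound $\Pr[D\subseteq G]$ by a first moment. Writing $v=|V(D)|$, $e=|E(D)|$ and $p=n^{-1/a^*}$, the expected number of (labelled) copies of $D$ in $G$ is at most $n^{v}p^{e}=n^{\,v-e/a^*}$, so $\Pr[D\subseteq G]\le n^{\,v-e/a^*}=o_n(1)$ provided $e/v>a^*$. (Alternatively one could invoke the Erd\H{o}s--R\'enyi threshold and only check that the maximal density $\rho(D)$ exceeds $a^*$, but the crude first moment suffices once a strong enough density estimate is available.) Thus the lemma reduces to the purely combinatorial statement: every $D=D(S)$ arising from a $k$-cycle configuration has $|E(D)|/|V(D)|>a^*$.

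I would first observe that $D$ is connected: since $H$ is totally balanced it is connected, so each $H$-copy lies in one component of $D$, and if $D$ were disconnected the $k$-cycle realizing its girth would lie in a single component $K$, whence the sub-collection $S_K\subsetneq S$ of copies contained in $K$ would already have $D(S_K)$ of girth $k$, contradicting the minimality in the definition of a $k$-cycle configuration. The density bound is then obtained by a potential-function argument. For a connected graph $F$ put $\Phi(F)=|E(F)|-a(H)(|V(F)|-1)$, and build $D$ by adding its $H$-copies $H_1,\dots,H_t$ one at a time in an order for which every partial union $D_i$ stays connected. Because $H$ is totally balanced, $\Phi(D_1)=0$. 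When $H_i$ is added, set $I_i=D_{i-1}\cap H_i$; this is a subgraph of $H$, so (applying $|E(C)|\le a(H)(|V(C)|-1)$ to each component $C$) one gets $|E(I_i)|\le a(H)(|V(I_i)|-c(I_i))$, where $c(I_i)$ is the number of components of $I_i$. A direct computation gives $\Phi(D_i)-\Phi(D_{i-1})=a(H)(|V(I_i)|-1)-|E(I_i)|\ge a(H)(c(I_i)-1)\ge 0$, so $\Phi(D)\ge 0$, i.e.\ $|E(D)|\ge a(H)(|V(D)|-1)$.

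By itself this only gives $|E(D)|/|V(D)|\ge a(H)\bigl(1-1/|V(D)|\bigr)$, which beats $a^*$ only once $|V(D)|$ exceeds the fixed constant $a(H)/(a(H)-a^*)$; for the finitely many ``small'' configurations it is not enough. The crucial --- and I expect most difficult --- step is to upgrade this to $\Phi(D)\ge a(H)$ whenever $D$ contains a directed cycle, which yields $|E(D)|\ge a(H)\,|V(D)|$ and hence $|E(D)|/|V(D)|\ge a(H)>a^*$ for \emph{all} $D\in\D$. The guiding idea is that the increments $\Phi(D_i)-\Phi(D_{i-1})$ quantify how far the amalgamation of the copies departs from being ``tree-like'' (copies attached along a single vertex each), and that a tree-like amalgamation of dags is again a dag, since a directed cycle through a cut vertex must collapse into one copy. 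So a directed cycle has to be ``paid for'' by a total deficit of at least one copy's worth of arboricity, namely $a(H)$: concretely I would single out the first step at which the partial union stops being acyclic and show, via topological orderings of $D_{i-1}$ and of $H_i$, that the deficit accumulated up to that step is at least $a(H)$. The subtle point --- and the one that makes the bookkeeping nontrivial --- is that this deficit need not be concentrated at a single step: two acyclic unions can be glued along a connected, arboricity-tight overlap and still create a directed cycle, with the ``missing'' $a(H)$ spread over several earlier gluings, so one must control the \emph{cumulative} deficit rather than a single increment.
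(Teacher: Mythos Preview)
Your overall strategy---first moment plus the density inequality $|E(D)|/|V(D)|>a^*$---matches the paper's, and your potential-function derivation of $|E(D)|\ge a(H)(|V(D)|-1)$ is clean and correct. You also correctly isolate the decisive point: this must be upgraded to $|E(D)|\ge a(H)\,|V(D)|$, and the weaker bound is genuinely insufficient (cycle configurations with $|V(D)|$ below the constant $a(H)/(a(H)-a^*)$ do occur).

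The gap is that you leave this upgrade as a heuristic, and the sketch you offer is not easy to complete. Watching for the first $i$ at which $D_i$ becomes non-acyclic does not suffice: at that step the overlap $I_i$ may still be connected (so the increment $a(H)(c(I_i)-1)$ vanishes), because a directed cycle in $D_i$ can thread through several earlier copies and enter/exit $H_i$ only within a single component of $I_i$. As you yourself note, the deficit may be smeared over several steps, and the acyclicity-failure argument does not localize it.

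The paper resolves this not via a potential argument but by \emph{manufacturing} an ordering $H_1,\dots,H_t$ in which one designated step is forced to have vertex-overlap of size at least two with the running union. Concretely: fix the shortest directed $k$-cycle $C$ in $D$ and pick a vertex $v_1$ of $C$ lying in at least two elements of $S$ (such a vertex exists since each $H_j$ is acyclic, so the cycle must change copies somewhere). Take $H_1\ni v_1$; then in a second stage greedily attach copies that \emph{avoid} $v_1$ but touch the current union. When this stalls, every remaining copy either contains $v_1$ or is disjoint from the current union; the next copy $H_{r+1}$ is then shown, using the edges of $C$ it owns exclusively, to contain both $v_1$ and a second already-seen vertex $v_z$ on $C$. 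That single step of overlap $\ge 2$ supplies the extra $a(H)$ and yields $|E(D)|\ge a(H)\,|V(D)|$ directly. The cycle $C$ is used \emph{explicitly} to locate a step with excess overlap, rather than inferred indirectly from the failure of acyclicity; this constructive use of the cycle is the idea missing from your proposal.
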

\begin{proof}
	Fix $D \in \D$ with $D=D(S)$ where $S$ is a $k$-cycle configuration where $2 \le k \le k_0$.
	So, $S$ consists of copies of $H$ in $G$ with $2 \le |S|= t \le k$.
	Furthermore, $S$ is minimal in the sense that removing any element from it causes the union of the remaining elements to be a digraph with girth larger than $k$ (possibly infinite girth).
	In particular, the girth of $D$ is obtained by a directed $k$-cycle $C$ such that each element of $S$ contains
	at least one edge which belongs to $C$ and which does not belong to the other elements of $S$.
	Let $C=v_1,\ldots,v_k$ where $(v_i,v_{i+1}) \in E(D)$ (indices taken modulo $k$).
	Since $H$ is acyclic and since $C$ is a directed cycle, there is at least one vertex of $C$ that belongs to
	at least two distinct elements of $S$. Without loss of generality, $v_1$ is such a vertex.
	
	Let us totally order the elements of $S$ by $H_1,\ldots,H_t$ where the ordering is described below.
	It consists of four stages, where in the first stage we determine $H_1$. In the second stage
	we determine in sequence $H_2 \ldots H_r$. The third stage determines $H_{r+1}$.
	The fourth stage determines the remaining $H_{r+2},\ldots, H_t$. It may be that $r=1$ (in which case the second stage is empty) and it may be that $r=t-1$ (in which case the fourth stage is empty).
	
	First stage: Let $H_1 \in S$ with $v_1 \in V(H_1)$. Also, let $Q_1=V(H_1) \setminus \{v_1\}$ and let $h_1=|Q_1|=h-1$.
	
	Second stage: As long as there is some element $X \in S$ such that $v_1 \notin V(X)$ and
	$V(X) \cap (\cup_{j=1}^{i-1} V(H_j)) \neq \emptyset$ (so $X$ has at least one vertex other than $v_1$
	in common with at least one of the previously ordered elements), then let $H_i=X$.
	Also, let $Q_i = V(H_i) \setminus \cup_{j=1}^{i-1} V(H_j)$ be the set of vertices of $H_i$ not appearing in
	previous elements. Observe that $h_i = |Q_i| \le h-1$.
	
	Third stage: We have now reached the first time where we cannot apply the second stage.
	Observe that we have not yet ordered all elements since $v_1$ is a vertex of at least two elements of
	$S$, and only $H_1$ is a previously ordered element which contains $v_1$. 
	Suppose $H_2,\ldots,H_r$ were determined at the second stage where $1 \le r \le t-1$, and we now need to determine $H_{r+1}$. Let $H_{r+1}$  be one of the yet unordered elements.
	We claim that $H_{r+1}$ must contain $v_1$ and must contain at least one additional vertex appearing in previously ordered elements. Indeed, consider all edges of $C$ belonging $H_{r+1}$ and not to any previously
	order elements (recall that each element of $S$ contains at least one edge which belongs to $C$ and which
	does not belong to the other elements of $S$). As these edges form a union of disjoint nonempty paths,
	any such path has an endpoint which is not $v_1$ (every nonempty path has two endpoints). So this endpoint,
	call it $v_z$, belongs to a previously order element. Now if $v_1$ was not a vertex of $H_{r+1}$,
	we wouldn't have ended the second stage, as we could have picked $H_{r+1}$ in the second stage.
	So, both $v_1,v_z$ are distinct vertices of $H_{r+1}$ appearing in previous elements.
	Let $Q_{r+1} = \{v_1\} \cup (V(H_{r+1}) \setminus \cup_{j=1}^{r} V(H_j))$ be the set of vertices of $H_{r+1}$ not appearing in previous elements, in addition to $v_1$ which is included in $Q_{r+1}$.
	Observe that $h_{r+1} = |Q_{r+1}| \le h-1$.
	
	Fourth stage: Suppose we have already determined $H_1,\ldots,H_{i-1}$ with $i \ge r+2$.
	Now, if $i=t+1$ we are done. Otherwise, we determine
	$H_i$ as follows. Let $X \in S$ be any yet unordered element such that $V(X) \cap (\cup_{j=1}^{i-1} V(H_j)) \neq \emptyset$. Observe that there is at least one such element since $C$ is not yet covered.
	Let $H_i = X$ and let $Q_i = V(H_i) \setminus \cup_{j=1}^{i-1} V(H_j)$ be the set of vertices of $H_i$ not appearing in previous elements. Observe that $h_i = |Q_i| \le h-1$.
	
	Having completed the ordering $H_1,\ldots,H_t$ we can now evaluate the number of vertices and edges of $D$
	in terms of $h_1,\ldots,h_t$. First observe that $Q_1,\ldots,Q_t$ are pairwise disjoint sets which together
	partition the vertex set of $D$. So,
	$$
	|V(D)| = \sum_{i=1}^t h_i\;.
	$$
	To lower-bound $|E(D)|$ we proceed as follows. For $1 \le i \le t$, let $D_i$ be the union of $H_1,\ldots,H_i$
	(so $D=D_t$). We claim that $|E(D_i) \setminus E(D_{i-1})| \ge a(H)h_i$ (define $E(D_0)=\emptyset$).
	Observe first that this holds for $i=1$ since $|E(D_1)|=|E(H_1)|=m=(h-1)a(H)=h_1a(H)$.
	Suppose that $2 \le i \le t$. So,
	$E(D_i) \cap E(D_{i-1})$ is a set of edges of a subgraph of $H_i$ on $h-h_i$ vertices.
	Now, if $h_i=h-1$ then this subgraph is empty, so
	$|E(D_i) \setminus E(D_{i-1})|=|E(H_i)|=m=(h-1)a(H)=h_ia(H)$. Otherwise, by the definition of $a(H)$,
	this subgraph has at most $(h-h_i-1)a(H)$ edges, so
	$|E(D_i) \setminus E(D_{i-1})| \ge m - (h-h_i-1)a(H) = h_ia(H)$.
	We have now shown that
	$$
	|E(D)| = \sum_{i=1}^t |E(D_i) \setminus E(D_{i-1})| \ge  \sum_{i=1}^t a(H)h_i = a(H)|V(D)|\;.
	$$
	The probability that $G$ has a particular subgraph with $|V(D)|$ vertices and at least $a(H)|V(D)|$
	edges is at most 
	$$
	n^{|V(D)|}p^{a(H)|V(D)|} = n^{-|V(D)|(\frac{a(H)}{a^*}-1)} \le n^{-(\frac{a(H)}{a^*}-1)} = o_n(1)\;.
	$$
	\end{proof}
	
	\begin{lemma}\label{l:none-2}
		Let $D \in \D^*$. Then the probability that $G$ contains a subgraph isomorphic to $D$ is $o_n(1)$.
	\end{lemma}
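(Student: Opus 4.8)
The plan is to run the same first-moment argument as in the proof of Lemma~\ref{l:none-1}. The new ingredient is simply that a $k_0$-path configuration is forced to span at least $k_0+1$ vertices, which will more than make up for the fact that for path configurations I only expect to prove the slightly weaker density bound $|E(D)|\ge a(H)(|V(D)|-1)$, rather than the $|E(D)|\ge a(H)|V(D)|$ that the four-stage reordering produced in the cycle case; it is at this point that the free parameter $k_0$ finally gets chosen, in terms of $a^*$ and $a(H)$.

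So fix $D=D(S)\in\D^*$ with $S=\{H_1,\dots,H_t\}$ a $k_0$-path configuration, and let $P$ be an induced path of length $k_0$ in $D$; its $k_0+1$ vertices are distinct, so $|V(D)|\ge k_0+1$. First I would observe that $D$ is connected. Indeed $H$ is connected (a disconnected graph without isolated vertices is never totally balanced, since its fractional arboricity is attained by a proper subgraph), so each member of $S$ is a connected subgraph of $D$ and hence lies in a single component; if $K$ is the component containing $P$ and some $H_j$ lay outside $K$, then every edge of $K$ --- in particular every edge of $P$ --- would already appear in $D(S\setminus\{H_j\})$, still induced there, contradicting the minimality of $S$. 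Hence the intersection graph on $\{H_1,\dots,H_t\}$ (join two members that share a vertex) is connected, so after relabeling we may assume that for each $i\ge2$ the copy $H_i$ shares at least one vertex with $D_{i-1}:=H_1\cup\cdots\cup H_{i-1}$.

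The density count is then routine. We have $|E(D_1)|=m=a(H)(h-1)$ because $H$ is totally balanced. For $i\ge2$, if $H_i$ meets $D_{i-1}$ in $s_i\ge1$ vertices, it contributes $h-s_i$ new vertices, while the edges it shares with $D_{i-1}$ span a subgraph of $H$ on at most $s_i$ vertices and so number at most $a(H)(s_i-1)$; hence $H_i$ contributes at least $m-a(H)(s_i-1)=a(H)(h-s_i)$ new edges. Summing and using $|V(D)|=h+\sum_{i\ge2}(h-s_i)$ gives $|E(D)|\ge a(H)(|V(D)|-1)$. By the union bound the probability that $G$ contains a copy of $D$ is at most $n^{|V(D)|}p^{|E(D)|}\le n^{|V(D)|}p^{a(H)(|V(D)|-1)}=n^{\,|V(D)|(1-a(H)/a^*)+a(H)/a^*}$; since $a^*<a(H)$ the coefficient $1-a(H)/a^*$ is negative, so, as $|V(D)|\ge k_0+1$, the exponent is at most $(k_0+1)(1-a(H)/a^*)+a(H)/a^*$, and choosing $k_0>2a^*/(a(H)-a^*)$ makes this a negative constant. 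Hence the probability is $o_n(1)$, which (after a union bound over the constantly many members of $\D\cup\D^*$) is all that the proof of Theorem~\ref{t:2} needs.

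Conceptually the only thing to get right is the connectivity claim --- equivalently, the existence of a relabeling with $H_i\cap D_{i-1}\ne\emptyset$ for every $i\ge2$ --- since without it one could a priori have ``floating'' copies each adding $h$ new vertices but only $a(H)(h-1)$ new edges, which would sink the density estimate. The genuine difference from Lemma~\ref{l:none-1} is just the observation that we need not chase the extra additive $a(H)$ in the edge count, because a path configuration already guarantees many vertices; this is also why $k_0$ had to be left free up to this point.
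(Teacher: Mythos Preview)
Your argument is correct and follows the same first-moment strategy as the paper: order the copies so that each $H_i$ with $i\ge 2$ overlaps the previous union in at least one vertex, deduce $|E(D)|\ge a(H)(|V(D)|-1)$, and use $|V(D)|\ge k_0+1$ to make the exponent negative for a suitable choice of $k_0$. The only genuine difference is in how the ordering is obtained. You first prove $D$ is connected (using that a totally balanced $H$ with no isolated vertices must itself be connected, together with minimality of $S$) and then order by connectivity of the intersection graph. The paper instead orders the $H_i$'s by walking along the induced path $P$: it lets $H_i$ be any copy containing the first edge $(v_j,v_{j+1})$ of $P$ not yet covered, which forces $v_j\in V(H_i)\cap V(D_{i-1})$ directly. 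The paper's ordering is slightly more self-contained in that it uses only the path structure and never invokes connectedness of $H$; your route is perfectly valid here since $H$ is assumed totally balanced throughout Section~\ref{s:t-2}, but would need the paper's ordering if one ever wanted to reuse the lemma for a connected $H$ that is not totally balanced.
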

\begin{proof}
	Fix $D \in \D^*$ with $D=D(S)$ where $S$ is a $k_0$-path configuration.
	So, $S$ consists of copies of $H$ in $G$ with $1 \le |S|= t \le k_0$.
	Furthermore, $S$ is minimal in the sense that removing any element from it causes the union of the remaining elements to be a digraph with no induced path of length $k_0$.
	Let $P=v_0,\ldots,v_{k_0}$ be an induced path in $D$ of length $k_0$.
	
	Let us totally order the elements of $S$ by $H_1,\ldots,H_t$ where the ordering is described below.
	Let $H_1 \in S$ be any element that contains the edge $(v_0,v_1)$. Let $Q_1=V(H_1)$ and let $h_1=|Q_1|=h$. Suppose we have already determined $H_1,\ldots,H_{i-1}$ where $2 \le i \le t$.
	Let $j < k_0$ be the smallest index such that the edge $(v_j,v_{j+1})$ is not in any element of $H_1,\ldots,H_{i-1}$ (such an edge exists by the minimality of $S$).
	Let $H_i \in S$ be any element that contains $(v_j,v_{j+1})$.
	Let $Q_i = V(H_i) \setminus \cup_{j=1}^{i-1} V(H_j)$ be the set of vertices of $H_i$ not appearing in previous elements. Observe that $h_i = |Q_i| \le h-1$ since the vertex $v_j$ is in $V(H_i)$, but since
	the edge $(v_{j-1},v_j)$ is an edge of some previously ordered element, $v_j \notin Q_i$.
	
	Having completed the ordering $H_1,\ldots,H_t$ we can now evaluate the number of vertices and edges of $D$
	in terms of $h_1,\ldots,h_t$. First observe that $Q_1,\ldots,Q_t$ are pairwise disjoint sets which together
	partition the vertex set of $D$. So,
	$$
	|V(D)| = \sum_{i=1}^t h_i\;.
	$$
	To lower-bound $|E(D)|$ we proceed as follows. For $1 \le i \le t$, let $D_i$ be the union of $H_1,\ldots,H_i$
	(so $D=D_t$). We claim that for all $2 \le i \le t$ it holds that $|E(D_i) \setminus E(D_{i-1})| \ge a(H)h_i$.
	Indeed, $E(D_i) \cap E(D_{i-1})$ is a set of edges of a subgraph of $H_i$ on $h-h_i$ vertices.
	Now, if $h_i=h-1$ then this subgraph is empty, so
	$|E(D_i) \setminus E(D_{i-1})|=|E(H_i)|=m=(h-1)a(H)=h_ia(H)$. Otherwise, by the definition of $a(H)$,
	this subgraph has at most $(h-h_i-1)a(H)$ edges, so
	$|E(D_i) \setminus E(D_{i-1})| \ge m - (h-h_i-1)a(H) = h_ia(H)$.
	We have now shown that
	$$
	|E(D)| = m + \sum_{i=2}^t |E(D_i) \setminus E(D_{i-1})| \ge m +  \sum_{i=2}^t a(H)h_i =
	(\sum_{i=1}^t a(H)h_i)-a(H) = a(H)(|V(D)|-1)\;.
	$$
	The probability that $G$ has a particular subgraph with $|V(D)|$ vertices and at least
	$a(H)(|V(D)|-1)$ edges is at most 
	$$
	n^{|V(D)|}p^{a(H)(|V(D)|-1)} = n^{-|V(D)|(\frac{a(H)}{a^*}-1)+\frac{a(H)}{a^*}} \le
	n^{-k_0(\frac{a(H)}{a^*}-1)+\frac{a(H)}{a^*}} = o_n(1)
	$$
	where the last inequality follows by setting $k_0=\lceil a(H)/(a(H)-a^*) \rceil$.
\end{proof}

\begin{proof}[Completing the proof of Theorem \ref{t:2}]
	Since $\D \cup D^*$ consists of only a bounded number of elements (at most $2^{k_0^2h^2+1}$),
	and since by Lemmas \ref{l:none-1} and \ref{l:none-2} each element of $\D \cup D^*$ is a subgraph of $G$ with probability $o_n(1)$, we have that almost surely $G$ has no element of $\D \cup D^*$ as a subgraph.
	In particular, almost surely $G_H$ has no element of $\D \cup D^*$ as a subgraph.
	By Lemma \ref{l:config}, almost surely $G_H$ is a dag.
	
\end{proof}

\section{Skewness}\label{s:skew}

For a vertex coloring of a graph, we say that a permutation of the vertices {\em respects} the coloring, if
all the vertices of any given color are consecutive. As mentioned in the introduction, the skewness of $H$, denoted by $s(H)$ is a central parameter of Theorem \ref{t:3}. Here is its definition.

\begin{definition}[Skewness]
	Let $H$ be a digraph. For a coloring $C$ of the vertices of $H$, let
	$s_H(C)=\max_{\pi} |E(H_L(\pi))|$ where the maximum is taken over all permutations of the vertices of $H$
	that respect $C$ (in particular, in any such permutation, we are guaranteed that the number of edges going from left to right is at most $s_H(C)$).
	The {\em skewness} of $H$, is
	$$
	s(H)=\min_{C} \, s_H(C)
	$$
	where the minimum is taken over all vertex colorings.
\end{definition}
Notice that $\lceil |E(H)|/2 \rceil \le s(H) \le |E(H)|$ as for any permutation $\pi$ it holds that
$|E(H_L(\pi))| \ge |E(H)|/2$ or else $|E(H_L(\pi^{rev}))| \ge |E(H)|/2$ (note: $\pi$ respects $C$ if and only if $\pi^{rev}$ respects $C$).
Although the definition of skewness applies to every digraph, we are interested in the case where $H$ is a dag.

\noindent {\bf Examples:}
\begin{itemize}
	\item {\em Rooted stars}.
	If $H$ is a star rooted at some vertex $v$ then consider any vertex coloring
	$C$. Suppose, wlog, that $v$ is a source. We can always find a permutation $\pi$ respecting $C$ in which $v$ is the first vertex. In such a permutation, $E(H_L(\pi))=E(H)$, so $s_H(C)=|E(H)|$.
	Thus, $s(H)=|E(H)|$.
	\item {\em Dags that are not rooted stars}.
	If $H$ is a dag that is not a rooted star then one of the following holds: Either $H$ has two disjoint edges, or else $H$ has a directed path on two edges.
	Consider first the case that $H$ has two disjoint edges, say $(a,b)$ and $(c,d)$. Color $\{a,d\}$ red
	and color $\{b,c\}$ blue (any remaining vertices may be colored arbitrarily). Then, in any
	permutation $\pi$ respecting such a coloring, $|E(H_R(\pi))| \ge 1$. Hence $s(H) \le |E(H)|-1$.
	Consider next the case that $H$ has a directed path on two edges, say $(a,b)$ and $(b,c)$ are the edges of
	such a path. Color $\{a,c\}$ red and color $b$ blue (any remaining vertices may be colored arbitrarily).
	Then, in any permutation $\pi$ respecting such a coloring, $|E(H_R(\pi))| \ge 1$. Hence $s(H) \le |E(H)|-1$.
	We have proved that if $H$ is a dag that is not a rooted star, then $s(H) \le |E(H)|-1$.
	\item {\em Balanced bipartite dags}.
	A balanced bipartite dag is a bipartite dag with a bipartition in which
	precisely half of the edges go from one part to the other (for example, a directed path with an even number of edges satisfies this requirement). Let $H$ be such a dag. Consider a coloring in which the vertices of one
	part are red and the vertices of the other part are blue.  Then, in any
	permutation $\pi$ respecting such a coloring, $|E(H_L(\pi))| = |E(H)|/2$. Hence $s(H)=|E(H)|/2$.
	Notice that if $H$ is a bipartite dag with an odd number of edges where the number of edges going from some part to the other is $\lfloor |E(H)|/2 \rfloor$, then a similar argument shows that
	$s(H)=\lceil |E(H)|/2 \rceil$.
	\item {\em Transitive tournaments}. Suppose that $H=T_h$ where the vertices are labeled $\{1,\ldots,h\}$
	and the edges are $(i,j)$ for $1 \le i < j \le h$. Assume first that $h$ is even.
	Consider the coloring with $h/2$ colors whose color classes are $\{i,h+1-i\}$ for $i=1,\ldots,h/2$.
	In any permutation $\pi$ respecting this coloring, precisely half of the edges connecting vertices in distinct vertex classes go from right to left, so we have that $|E(H_L(\pi))| \le h/2+ (\binom{h}{2}-h/2)/2=h^2/4$.
	It therefore holds that $s(T_h) \le h^2/4$. Assume next that $h$ is odd. Consider the coloring with
	$(h+1)/2$ colors whose color classes are $\{i,h+1-i\}$ for $i=1,\ldots,(h+1)/2$ (observe that the last color class only consists of vertex $(h+1)/2$). In any permutation $\pi$ respecting this coloring,
	precisely half of the edges connecting vertices in distinct vertex classes go from right to left, so we have
	that $|E(H_L(\pi))| \le (h^2-1)/4$. It therefore holds that $s(T_h) \le (h^2-1)/4$. Observe, in particular,
	that $s(T_3)=2$ and, more generally, $s(T_h)=(1+o_h(1))|E(T_h)|/2$.
\end{itemize}

We have already observed that $\lceil |E(H)|/2 \rceil \le s(H) \le |E(H)|-1$ if $H$ is not a rooted star.
However, in order to prove that Theorem \ref{t:3} is a significant sharpening of Theorem, \ref{t:1} we would like to prove  that $s(H)$ is just barely above $|E(H)|/2$ for all dags which are not too sparse. This is proved in the following lemma.
\begin{lemma}\label{l:skew}
	Let $H$ be a dag with $h$ vertices and $m$ edges. Then $s(H)=m/2+O(m^{4/5}h^{1/5})$.
\end{lemma}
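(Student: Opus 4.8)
The plan is to prove only the upper bound $s(H)\le m/2+O(m^{4/5}h^{1/5})$, since the matching lower bound $s(H)\ge\lceil m/2\rceil$ has already been recorded above, and $s(H)\ge m/2$ forces $|s(H)-m/2|=s(H)-m/2$. The key structural reduction is as follows. A permutation $\pi$ that respects a colouring $C$ is specified by a linear order $\prec$ of the colour classes together with an arbitrary internal order of each class. Write $E_1=E_1(C)$ for the number of edges of $H$ with monochromatic endpoints, and $F(\prec)$ for the number of between-class edges pointing forward along $\prec$ minus the number pointing backward. Since the between-class edges split into $\tfrac12(m-E_1)+\tfrac12 F(\prec)$ forward and $\tfrac12(m-E_1)-\tfrac12 F(\prec)$ backward, and at most all $E_1$ monochromatic edges can point forward,
\[
|E(H_L(\pi))|\ \le\ \tfrac12(m-E_1)+\tfrac12 F(\prec)+E_1\ =\ \tfrac m2+\tfrac{E_1}{2}+\tfrac{F(\prec)}{2},
\]
so that $s_H(C)\le \tfrac m2+\tfrac{E_1}{2}+\tfrac12\max_{\prec}F(\prec)$. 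It remains to produce a colouring making both $E_1$ and $\max_\prec F(\prec)$ small.

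I would take the random colouring with $k$ colours in which each vertex gets an independent uniform colour in $[k]$, with $k$ a parameter fixed at the end. Then $\mathbf{E}[E_1]=m/k$, so $E_1\le 2m/k$ with probability at least $1/2$ by Markov. For the second term, fix a linear order $\prec$ of $[k]$. Then $F(\prec)=\sum_{(u,v)\in E(H)}\mathrm{sgn}(g(v)-g(u))$, where $g(v)$ is the rank under $\prec$ of $v$'s colour; since $\prec$ is a fixed bijection, $(g(v))_{v}$ is still i.i.d.\ uniform on $[k]$, and by exchangeability $\mathbf{E}[F(\prec)]=0$. Moreover, changing a single $g(v)$ changes $F(\prec)$ by at most $2\deg_H(v)$, and as $H$ is simple, $\deg_H(v)\le 2(h-1)$ and hence $\sum_v\deg_H(v)^2\le 2(h-1)\sum_v\deg_H(v)=4(h-1)m$.

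By Azuma's inequality with these bounded differences, $\Pr[|F(\prec)|\ge t]\le 2\exp(-c\,t^2/(hm))$ for an absolute constant $c>0$, and a union bound over the at most $k!$ orders $\prec$ (so $\log(k!)=O(k\log k)$) gives $\max_\prec|F(\prec)|=O(\sqrt{hmk\log k})$ with probability at least, say, $3/4$. Combining with the bound on $E_1$, with positive probability there is a colouring $C$ satisfying $s_H(C)\le \tfrac m2+\tfrac mk+O(\sqrt{hmk\log k})$, hence $s(H)$ is at most this. If $m^{4/5}h^{1/5}\ge m/2$ the claim is trivial because $s(H)\le m$ always; otherwise $m/h$ is large, and setting $k=\lceil (m/h)^{1/5}\rceil$ gives $m/k=O(m^{4/5}h^{1/5})$, while $\sqrt{hmk\log k}=O\big(m^{3/5}h^{2/5}\sqrt{\log(m/h)}\big)=O(m^{4/5}h^{1/5})$ as well (using $\sqrt{\log x}=O(x^{1/5})$). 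This yields $s(H)\le m/2+O(m^{4/5}h^{1/5})$.

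The step I expect to be the main obstacle is controlling $\max_\prec F(\prec)$ uniformly over all $k!$ class orders, and this is exactly where the dependence on $h$ enters: the Azuma exponent is governed by $\sum_v\deg_H(v)^2$, which for a general dag can only be bounded by $O(hm)$ rather than $O(m)$ (and a near-star shows this loss is essentially unavoidable). One also has to make sure the two error terms $m/k$ and $\sqrt{hmk\log k}$ can be balanced inside the range where the statement is nontrivial; the choice $k\asymp (m/h)^{1/5}$ achieves the stated exponent, while balancing the two terms more tightly would in fact give a somewhat smaller error.
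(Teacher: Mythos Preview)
Your argument is correct, and it takes a genuinely different route from the paper's proof. Both proofs use the same random $k$-colouring and the same Markov bound $E_1\le 2m/k$; the divergence is in how the between-class contribution is handled. The paper controls each directed pair count $|F(i,j)|$ separately via Chebyshev (the variance estimate there is where the bound on pairs of incident edges, hence the factor $hm$, enters), takes a union bound over the $k(k-1)$ pairs, and then uses $|E(H_L(\pi))|\le |F|+\sum_{i<j}\max\{|F(i,j)|,|F(j,i)|\}$, yielding a between-class error of order $k^{3/2}\sqrt{hm}$. You instead reduce to the signed sum $F(\prec)$, apply McDiarmid/Azuma with differences $2\deg_H(v)$ (so $\sum c_v^2=O(hm)$, the same $hm$ appearing for the same combinatorial reason), and union bound over the $k!$ class orders, yielding a between-class error of order $\sqrt{hmk\log k}$. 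Your between-class term is in fact strictly smaller than the paper's by a factor of roughly $k/\sqrt{\log k}$, which is exactly why, as you note, a choice $k\asymp (m/h)^{1/3}$ would balance your two errors at roughly $m^{2/3}h^{1/3}(\log(m/h))^{1/3}$, improving on the stated $m^{4/5}h^{1/5}$; with the paper's Chebyshev approach the two errors are genuinely balanced at $k\asymp(m/h)^{1/5}$. So your method is not just an alternative but a sharper one, at the cost of invoking exponential concentration rather than the second moment.
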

\begin{proof}
	Let $k$ be a positive integer parameter to be set later.
	We consider a random coloring of $V(H)$ with the set of colors $[k]$, where each vertex uniformly and independently chooses its color. Let the color color classes be $A_1,\ldots,A_k$ where $A_i$ is the set of vertices colored with color $i$.
	
	An edge $e \in E(H)$ is an {\em inside edge} if both of its endpoints are colored the same.
	Let $F$ be the set of inside edges. We have that ${\mathbb E}[|F|]=m/k$.
	By Markov's inequality
	\begin{equation}\label{e:markov}
	\Pr[|F| \ge 2m/k] \le \frac{1}{2}\;.
	\end{equation}
	
	Let $F(i,j)$ be the set of edges going from $A_i$ to $A_j$.
	Clearly, ${\mathbb E}[|F(i,j)|]=m/k^2$. We also need to upper bound the probability that $|F(i,j)|$ deviates significantly from its expected value. As we will use Chebyshev's inequality for this task, we present
	$|F(i,j)|$ as the sum of $m$ indicator random variables $X(a,b)$ for each $(a,b) \in E(H)$, where
	$X(a,b)=1$ if and only if $a \in A_i$ and $b \in A_j$.
	To upper-bound $Var[|F(i,j)|]$ we must therefore upper-bound $Cov(X(a,b),X(c,d))$ for a pair of distinct edges $(a,b),(c,d) \in E(H)$. Now, if $\{a,b\} \cap \{c,d\} = \emptyset$ then $X(a,b),X(c,d)$ are independent.
	Otherwise, $|\{a,b\} \cup \{c,d\}|=3$ and we have that
	$Cov(X(a,b),X(c,d)) \le  \Pr[(X(a,b) = 1) \cap (X(c,d) =1)] \le 1/k^3$. As a graph with $h$ vertices and
	$m$ edges has fewer than $2hm$ ordered pairs of edges with a common endpoint, we have that
	$$
	Var[|F(i,j)|] \le {\mathbb E}[|F(i,j)|]+ \frac{2hm}{k^3} = \frac{m}{k^2}+ \frac{2hm}{k^3} \le \frac{3hm}{k^3}\;.
	$$
	By Chebyshev's inequality, we have that
	$$
	\Pr\left[|F(i,j) - \frac{m}{k^2}| \ge \sqrt{\frac{6hm}{k}}\right] \le \frac{3hm}{k^3} \cdot \frac{1}{(6hm)/k} = \frac{1}{2k^2}\;.
	$$
	As there are at most $k(k-1)$ ordered pairs $(i,j)$ with $i \neq j$ and $1 \le i, j \le k$, we have by the last inequality, by \eqref{e:markov}, and the union bound that with probability at least $1-(1/2)-k(k-1)/(2k^2) > 0$, it holds that $|F| \le 2m/k$ and for all $i,j$ with $i \neq j$, $|F(i,j) - m/k^2| \le \sqrt{6hm/k}$.
	So, hereafter we assume that this is the case for our coloring $C$.
	
	Consider any permutation $\pi$ respecting $C$. Then
	\begin{align*}
	|E(H_L(\pi))| & \le |F| + \sum_{1 \le i < j \le k} \max\{|F(i,j)|\,,\,|F(j,i)|\}\\
	& \le \frac{2m}{k}+ \frac{k(k-1)}{2}\left(\frac{m}{k^2}+\sqrt{\frac{6hm}{k}}\right)\\
	& \le \frac{m}{2}+\frac{2m}{k}+2k^{3/2}h^{1/2}m^{1/2}\;.
	\end{align*}
	Choosing $k = \lceil (m/h)^{1/5} \rceil$ we obtain from the last inequality that
	$|E(H_L(\pi))| \le m/2 + O(m^{4/5}h^{1/5})$, proving that $s_H(C) \le m/2+O(m^{4/5}h^{1/5})$,
	whence $s(H) = m/2+O(m^{4/5}h^{1/5})$.
\end{proof}

\section{Proof of Theorem \ref{t:3}}\label{s:t-3}

Throughout this section we fix a dag $H$ that is not a rooted star and fix $a^* > a(H)$.
Let $h=|V(H)|$ and $m=|E(H)|$. By the definition of $a(H)$, any subgraph of $H$ with $2 \le t \le h$
vertices has at most $(t-1)a(H)$ edges and in particular, $m \le a(H)(h-1)$.

Let $c^* = c^*(a^*,H)$ be a small positive constant to be determined later.
Whenever necessary, we assume that $n$ is sufficiently large as a function of $H$ and $a^*$.
Let $[n]$ be the set of the vertices of $G \sim {\vec G}(n,n^{-1/a^*})$.
We must prove that the following holds almost surely: For every set $X$ of $x= \lfloor c^*\log n \rfloor$ permutations of $[n]$, there is an $H$-copy of $G$ such that for each $\pi \in X$, $G_L(\pi)$ contains at most $s(H)$ edges of that copy, where $s(H)$ is the skewness of $H$.

Let $X=\{\pi_1,\ldots,\pi_x\}$.
For nonempty disjoint sets $A,B \subset [n]$, we say that $A \Rightarrow B$
in $\pi_i$, if all elements of $A$ are before all elements of $B$ in $\pi_i$.
For $r$ nonempty disjoint sets of vertices $A_1,\ldots,A_r$ with $A_j \subset [n]$, we say that $A_1,\ldots,A_r$ are {\em consistent} with $X$ if for all $1 \le i \le x$
and for all $1 \le j,j' \le r$ with $j \neq j'$, either $A_j \Rightarrow A_{j'}$ in
$\pi_i$ or $A_{j'} \Rightarrow A_j$ in $\pi_i$.
Stated otherwise, in each permutation of $X$ restricted to $\cup_{\ell=1}^r A_\ell$, all the elements of each $A_j$ are consecutive.

\begin{lemma}\label{l:consistent}
	Let $r=2^t$ for some positive integer $t$ and let $n$ be a multiple of $r^x$ where $x$ is a positive integer.
	Suppose that $X=\{\pi_1,\ldots,\pi_x\}$ is a set of permutations of $[n]$.
	There exist disjoint sets $A_1,\ldots,A_r$ with $A_j \subset [n]$
	that are consistent with $X$. Furthermore, $|A_j| = n/r^x$ for $1 \le j \le r$.
\end{lemma}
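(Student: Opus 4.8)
The plan is to reduce the statement to its $r=2$ case (that is, $t=1$) and then iterate it $t$ times through a tree of refinements. Throughout, I will use the obvious fact that consistency is inherited by subsets: if $B_0,B_1$ are consistent with a permutation $\pi$ and $A_0\subseteq B_0$, $A_1\subseteq B_1$, then $A_0,A_1$ are consistent with $\pi$ (from $B_0\Rightarrow B_1$ in $\pi$ we get $A_0\Rightarrow A_1$ in $\pi$).

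\textbf{Step 1 (the case $t=1$).} I would first prove: for any finite set $S$ with $2^x\mid|S|$ and any permutations $\sigma_1,\dots,\sigma_x$ of $S$, there are disjoint $S_0,S_1\subseteq S$ with $|S_0|=|S_1|=|S|/2^x$ that are consistent with $\{\sigma_1,\dots,\sigma_x\}$. This is proved by induction on $x$. For $x=1$ let $S_0$ and $S_1$ be the first and last $|S|/2$ elements of $S$ in the order $\sigma_1$, so $S_0\Rightarrow S_1$ in $\sigma_1$. For the inductive step, apply the induction hypothesis to $\sigma_1,\dots,\sigma_{x-1}$ to obtain disjoint $T_0,T_1\subseteq S$ of (even) size $|S|/2^{x-1}$ consistent with $\sigma_1,\dots,\sigma_{x-1}$. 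Let $M$ be the first $|S|/2^{x-1}$ elements of $T_0\cup T_1$ in the order $\sigma_x$, and $M'$ the remaining $|S|/2^{x-1}$; note every element of $M$ precedes every element of $M'$ in $\sigma_x$. By pigeonhole $|M\cap T_0|\ge|S|/2^x$ or $|M\cap T_1|\ge|S|/2^x$; say the former. Since $|T_1|=|M|$ we get $|T_1\setminus M|=|M\cap T_0|\ge|S|/2^x$, and $T_1\setminus M\subseteq M'$. Now pick $S_0\subseteq M\cap T_0$ and $S_1\subseteq T_1\setminus M$ of size exactly $|S|/2^x$ each: consistency with $\sigma_1,\dots,\sigma_{x-1}$ is inherited since $S_0\subseteq T_0$, $S_1\subseteq T_1$, while $S_0\Rightarrow S_1$ in $\sigma_x$ because $S_0\subseteq M$ and $S_1\subseteq M'$.

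\textbf{Step 2 (assembling general $r=2^t$).} I would build a rooted binary tree of depth $t$ with the root labelled $[n]$. If a node at depth $d<t$ is labelled by a set $S$, then $|S|=n/2^{dx}$ is a multiple of $2^{(t-d)x}\ge 2^x$, so Step 1 applies to $S$ with the permutations $\pi_1|_S,\dots,\pi_x|_S$; label the two children by the resulting sets $S_0,S_1\subseteq S$. The $r=2^t$ leaves lie at depth $t$ and are labelled by pairwise disjoint sets $A_1,\dots,A_r$, each of size $n/2^{tx}=n/r^x$. For consistency, take distinct leaves $A,A'$ and let $S$ be their last common ancestor, with $A$ a descendant of child $S_0$ and $A'$ a descendant of child $S_1$. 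Step 1 ensures that for every $i$ either $S_0\Rightarrow S_1$ or $S_1\Rightarrow S_0$ in $\pi_i$, and since $A\subseteq S_0$ and $A'\subseteq S_1$ the same relation holds for $A,A'$ (relative order of elements of $S$ agrees in $\pi_i$ and $\pi_i|_S$). Hence $A_1,\dots,A_r$ are consistent with $X$ and have the required size, proving the lemma.

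\textbf{Expected main obstacle.} The heart of the argument is the inductive step of Step 1: the median split of a single permutation followed by a pigeonhole that upgrades consistency with $x-1$ permutations to consistency with $x$ permutations while shrinking the two sets by only a factor of $2$. This step is exactly what forces the hypothesis $r=2^t$ — for $r\ge 3$ one cannot in general extract $r$ suitably sized monochromatic runs from one permutation, so the binary step has to be iterated $t$ times along the tree. The remaining work — inheritance of consistency under passing to subsets and the divisibility bookkeeping along the tree (which only uses $r^x=2^{tx}\mid n$) — is routine.
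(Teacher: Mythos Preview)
Your proof is correct and follows essentially the same approach as the paper: the paper also first establishes the $r=2$ case by induction on the number of permutations via the same median-split-plus-pigeonhole argument, and then obtains general $r=2^t$ by repeatedly applying the $r=2$ case to each of the current sets (the paper phrases this as induction on $t$, you as a depth-$t$ binary tree, but the construction is identical). The only cosmetic difference is that you make the inheritance-under-subsets and divisibility bookkeeping explicit, which the paper leaves implicit.
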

\begin{proof}
	We first prove the lemma for $t=1$. Namely, we prove that there are two sets $A_1,A_2$
	that are consistent with $X$, each of them containing $n/2^x$ vertices.
	We construct the claimed sets $A_1,A_2$ inductively.
	Let $X_i=\{\pi_1,\ldots,\pi_i\}$. We construct disjoint sets $A_{1,i},A_{2,i}$
	that are consistent with $X_i$ and each of them has size at least $n/2^i$.
	As $X=X_x$, the lemma will follow for $t=1$.
	
	Starting with $X_1=\{\pi_1\}$, we set $A_{1,1}$ to be the first $n/2$ elements of $\pi_1$ and
	set $A_{2,1}$ to be the last $n/2$ elements of $\pi_1$. Hence $A_{1,1} \Rightarrow A_{2,1}$ and both are
	of the required size. Assume that we have already defined $A_{1,i}$, $A_{2,i}$ that are consistent with
	$X_i$ and each have size $n/2^i$. Let $\sigma$ denote the restriction of $\pi_{i+1}$ to $A_{1,i} \cup A_{2,i}$,
	so $\sigma$ is a permutation of $A_{1,i} \cup A_{2,i}$ of order $n/2^{i-1}$.
	Let $C$ denote the first $n/2^i$ elements of $\sigma$ and let $D$ be the remaining $n/2^i$ elements of $\sigma$. Now, suppose first that $|A_{1,i} \cap C| \ge n/2^{i+1}$. Then we must have that
	$|A_{2,i} \cap D| \ge n/2^{i+1}$ so we may set $A_{1,i+1}$ to be any subset of $A_{1,i} \cap C$ of size
	$n/2^{i+1}$ and $A_{2,i+1}$ to be any subset of $A_{2,i} \cap D$ of size $n/2^{i+1}$.
	Otherwise, we must have $|A_{1,i} \cap D| \ge n/2^{i+1}$. Then we must have that $|A_{2,i} \cap C| \ge n/2^{i+1}$ so we may set $A_{1,i+1}$ to be any subset of $A_{1,i} \cap D$ of size $n/2^{i+1}$ and
	set $A_{2,i+1}$ to be any subset of $A_{2,i} \cap C$ of size $n/2^{i+1}$. Observe that in any case, we have that either $A_{1,i+1} \Rightarrow A_{2,i+1}$ in $\pi_{i+1}$ or else $A_{2,i+1} \Rightarrow A_{1,i+1}$
	in $\pi_{i+1}$. Hence, $A_{1,i+1},A_{2,i+1}$ are consistent with $X_{i+1}$ and are of the required size.
	
	Having proved the base case $t=1$ of the lemma, we prove the general case by induction.
	So, assume that $r=2^t$ with $t > 1$ and that we already found disjoint subsets
	$A_1,\ldots,A_{r/2}$, each of size $n/(r/2)^x$ that are consistent with $X$.
	Applying the case $t=1$ to each $A_j$ separately as the ground set, we can find, for each $1 \le j \le r/2$,
	two disjoint sets of $A_j$, say $B_j$ and $C_j$, each of size $|A_j|/2^x=n/r^x$ such that
	$B_j,C_j$ are consistent with $X |_{A_j}$.
	Hence, $B_1,C_1,B_2,C_2,\ldots,B_{r/2},C_{r/2}$ are $r$ disjoint subsets of $[n]$ that are consistent with $X$
	and each is of the required size $n/r^x$.
\end{proof}

Our approach to proving Theorem \ref{t:3} is to show that almost surely, there is a labeled $H$-copy in $G$
that is suitably embedded inside $r$ disjoint subsets of vertices that are consistent with $X$.
As we have no control over the chosen $X$ (but we do know that it contains $x= \lfloor c^*\log n \rfloor$ permutations), we also do not have control over the $r$ disjoint subsets consistent with it (but do have control on their size, by Lemma \ref{l:consistent}), so we must guarantee that there are many labeled $H$-copies
such that, almost surely, no choice of $r$ disjoint subsets can avoid a labeled $H$-copy.
We next formalize our arguments.

By the definition of $s(H)$, there is a vertex coloring $C$ of $V(H)$ such that for any permutation $\pi$
of $V(H)$ which respects $C$, it holds that $|E(H_L(\pi))| \le s(H)$. Now, suppose that $C$ uses $r$ colors.
We may assume that $r=2^t$ for some positive integer $t$, as otherwise we can just add some dummy unused colors.
As this assumption can only increase the number of colors by less than a factor of $2$, we have that
$r \le 2h-2$. For $1 \le i \le r$, let $W_i \subset V(H)$ be the vertices colored with color $i$
(possibly $W_i = \emptyset)$ and observe that $\cup_{i=1}^r W_i = V(H)$. Also note that in any
permutation of $V(H)$ that respects $C$, the vertices of $W_i$ are consecutive.

Recall that a labeled $H$-copy $H^*$ of $G$ is synonymous with an injective mapping $\phi: V(H) \rightarrow V(G)$
such that $(u,v) \in E(H)$ implies that $(\phi(u),\phi(v)) \in E(G)$.
Let $(V_1,\ldots,V_r)$ be an $r$-tuple of disjoint sets of vertices of $G$.
We say that an $H$-copy $H^*$ of $G$ is {\em consistent} with $(V_1,\ldots,V_r)$ if
$\phi(W_i) \subseteq V_i$ for $1 \le i \le r$. The following lemma shows that, almost surely,
for all $r$-tuples $(V_1,\ldots,V_r)$ in which all $V_i$'s are large, there is an $H$-copy in $G$ consistent with the $r$-tuple.

\begin{lemma}\label{l:main-t3}
	There exists $\alpha=\alpha(a^*,H) < 1$ such that almost surely, for all $r$-tuples $(V_1,\ldots,V_r)$ of disjoint sets of vertices of $G$ where $|V_i| = \lfloor n^\alpha \rfloor$, there is an $H$-copy of $G$ consistent with $(V_1,\ldots,V_r)$.
\end{lemma}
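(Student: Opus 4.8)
The plan is a union bound over large tuples, with a per-tuple bound coming from Janson's inequality. Fix a vertex coloring $C$ of $H$ attaining the minimum in the definition of $s(H)$; as already noted we may assume it uses exactly $r=2^t$ colors with $r\le 2h-2$, with color classes $W_1,\dots,W_r$ (so $\bigcup_i W_i=V(H)$). Since $a^*>a(H)$ and $a(H)\ge m/(h-1)$, fix a constant $\alpha$ with $a(H)/a^*<\alpha<1$ and set $N=\lfloor n^\alpha\rfloor$. We will show that for any \emph{single} $r$-tuple $(V_1,\dots,V_r)$ of pairwise disjoint $N$-subsets of $[n]$, the probability that $G$ has no consistent $H$-copy is at most $\exp(-c\,n^{\alpha+\epsilon})$ for constants $c,\epsilon>0$ depending only on $a^*,H$; since there are at most $\binom{n}{N}^r\le n^{rN}=\exp(rN\ln n)$ such tuples and $rN\ln n=o(n^{\alpha+\epsilon})$ (as $r=O_H(1)$ and $N=n^\alpha$), the union bound then finishes the proof.

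For a fixed tuple, let $Y$ count the injective maps $\phi\colon V(H)\to\bigcup_i V_i$ with $\phi(W_i)\subseteq V_i$ for all $i$ that realize an $H$-copy of $G$. The number of such maps is $(1+o(1))N^h$, so ${\mathbb E}[Y]=(1+o(1))N^h p^m=(1+o(1))n^{\alpha h-m/a^*}$, and $\alpha>a(H)/a^*\ge m/(a^*(h-1))$ forces $\alpha h-m/a^*>\alpha$, i.e.\ ${\mathbb E}[Y]\ge n^{\alpha+\epsilon_1}$ for some $\epsilon_1>0$. The events ``$\phi$ realizes a copy'' are increasing in the edges of $G$, and two of them are independent unless the copies share an edge, so Janson's inequality applies: $\Pr[Y=0]\le\exp(-{\mathbb E}[Y]+\bar\Delta/2)$, where $\bar\Delta=\sum\Pr[\phi,\psi\text{ both realize copies}]$ over ordered pairs of distinct maps whose images share an edge. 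Grouping such pairs by the isomorphism type of their shared subgraph $H''\subseteq H$ (with $v''$ vertices and $e''\ge1$ edges) gives $\bar\Delta=O_H\!\big(\sum_{H''}N^{2h-v''}p^{2m-e''}\big)$, hence (there being finitely many summands)
\[
\frac{{\mathbb E}[Y]^2}{\bar\Delta}=\Omega_H\!\Big(\min_{H''}N^{v''}p^{e''}\Big)=\Omega_H\!\Big(n^{\min_{H''}(\alpha v''-e''/a^*)}\Big)\ge n^{\alpha+\epsilon_2}
\]
for some $\epsilon_2>0$: indeed every subgraph $H''$ of $H$ with an edge satisfies $e''/(v''-1)\le a(H)<\alpha a^*$, so $\alpha v''-e''/a^*>\alpha$, and the minimum over the finitely many $H''$ beats $\alpha$ by a fixed constant. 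Using $\Pr[Y=0]\le e^{-{\mathbb E}[Y]/2}$ when $\bar\Delta\le{\mathbb E}[Y]$ and the generalized Janson bound $\Pr[Y=0]\le e^{-{\mathbb E}[Y]^2/(2\bar\Delta)}$ otherwise, we obtain $\Pr[Y=0]\le\exp(-c\,n^{\alpha+\epsilon})$ with $\epsilon=\min(\epsilon_1,\epsilon_2)$, as claimed.

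I expect the main obstacle to be the strength of concentration required in the per-tuple step: the failure probability must be \emph{exponentially} small in $N=n^\alpha$ to survive the union bound over the $\exp(\Theta(n^\alpha\log n))$ tuples, so a second-moment estimate (which is only polynomially small) will not do, and one really needs the exponential tail of Janson's inequality. The point that makes this go through is that the error term $\bar\Delta$ is governed exactly by the fractional arboricity of $H$, which is precisely why the hypothesis $a^*>a(H)$ --- and not merely $a^*>\rho(H)$, which only guarantees that some $H$-copy exists somewhere in $G$ --- is needed here.
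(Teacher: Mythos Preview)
Your proof is correct and follows essentially the same approach as the paper: fix $\alpha$ with $a(H)/a^*<\alpha<1$, apply Janson's inequality to a fixed tuple to get failure probability $\exp(-n^{\alpha+\epsilon})$, and take a union bound over the $\exp(O(n^\alpha\log n))$ tuples. The only cosmetic differences are that the paper indexes the Janson events by unordered $h$-sets $U$ (with $|U_i|=|W_i|$) rather than by injective maps $\phi$, and it decomposes $\Delta$ according to the intersection size $t=|U\cap U'|$ (bounding shared edges by $(t-1)a(H)$) rather than by the isomorphism type $H''$ of the overlap; both decompositions yield the same exponents and the same two-case Janson analysis.
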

\begin{proof}
	Since $a^* > a(H)$ we may fix a constant $\alpha$ such that
	\begin{equation}\label{e:alpha}
		1 > \alpha > \frac{a(H)}{a^*}\;.
	\end{equation}
	First observe that the number of
	possible $r$-tuples satisfying the lemma's assumption is at most
	$$
	(n^{n^\alpha})^r < n^{2h n^\alpha}\;.
	$$
	where we have used that $r < 2h$.
	So, fixing such an $r$-tuple $(V_1,\ldots,V_r)$, it suffices to prove that the probability that
	there is no labeled $H$-copy consistent with it is $o(n^{-2h n^\alpha})$.
	
	Recall that $W_i \subset V(H)$ is the set of vertices of $H$ colored with color $i$.
	Now, consider an $r$-tuple $U=(U_1,\ldots,U_r)$ with $U_i \subset V_i$ and $|U_i|=|W_i|$.
	We will also view $U$ as the set $\cup_{i=1}^r U_i$ so $|U|=h$.
	Let $A(U)$ be the event that there is an $H$-copy $\phi$ in $G$ such that
	$\phi(W_i)=U_i$ for $1 \le i \le r$ (notice that if this event holds, then there is an $H$-copy
	consistent with $(V_1,\ldots,V_r)$). We have that
	\begin{equation}\label{e:prau}
		p^m \le \Pr[A(U)] \le h! p^m
	\end{equation}
	as there are at most $\pi_{i=1}^r {|U_i|!} \le h!$ possible mappings to consider and for each of them, the probability of being an $H$-copy is precisely $p^m$. So, our goal is to prove that
	$$
		\Pr[\cap_{U} \overline{A(U)}] = o(n^{-2h n^\alpha})
	$$
	where the intersection is over all possible choices of $U$.
	
	For two events $A(U)$ and $A(U')$ with $U \neq U'$, we write $A(U) \sim A(U')$
	if $|U \cap U'| \ge 2$. Observe that if $A(U) \nsim A(U')$
	then $A(U)$ and $A(U')$ are independent, as they involve disjoint sets of possible edges.
	Let
	$$
		\Delta = \sum_{U \sim U'} \Pr[A(U) \cap A(U')],~~~\mu = \sum_{U} \Pr[A(U)]
	$$
	where the sum for $\Delta$ is over ordered pairs.
	We will upper-bound $\Delta$ and lower-bound $\mu$ so that we will be able to fruitfully apply Janson's inequality.
	First, observe that the number of possible choices for $U$ is precisely
	$$
		\prod_{i=1}^r \binom{|V_i|}{|U_i|} = \prod_{i=1}^r \binom{\lfloor n^\alpha \rfloor}{|W_i|} \ge \binom{\lfloor n^\alpha \rfloor}{h} 
	$$
	where the last inequality follows from the fact that $\sum_{i=1}^r|W_i|=h$.
	It follows from (\ref{e:prau}) and the last inequality that
	\begin{equation}\label{e:mu}
		\mu \ge \binom{\lfloor n^\alpha \rfloor}{h} p^m \ge h^{-h} n^{\alpha h-m/a^*}
		\ge h^{-h} n^{\alpha h-a(H)(h-1)/a^*} \ge h^{-h} n^{h(\alpha-\frac{a(H)}{a^*})+\frac{a(H)}{a^*}} \;.
	\end{equation}
	To evaluate $\Delta$, observe that if $U \sim U'$, then $2 \le |U \cap U'| \le h-1$, so we may partition the
	terms in the definition of $\Delta$ to $h-2$ parts according to the size of the $U \cap U'$.
	Let $2 \le t \le h-1$. We first estimate the number of ordered pairs $U,U'$ with $|U \cap U'|=t$.
	The number of choices for $U$ is less than $|\cup_{i=1}^r{V_i}|^h \le (rn^\alpha)^h$.
	As $U'$ contains $h-t$ other vertices not in $U$, but in $\cup_{i=1}^r{V_i}$, there are less than
	$|\cup_{i=1}^r{V_i}|^{h-t}$ choices for these vertices. As for the $t$ remaining vertices of the intersection
	$U \cap U'$, they are all taken from the $h$ vertices of $U$, so there are fewer than $h^t$ options for them.
	It follows that the number of ordered pairs $U,U'$ with $|U \cap U'|=t$ is less than
	$$
		(rn^\alpha)^h (rn^\alpha)^{h-t} h^t =h^t (rn^\alpha)^{2h-t}\;.
	$$
	Now, consider an $H$-copy $H_1$ on the vertices of $U$ and an $H$-copy $H_2$ on the vertices of $U'$.
	The number of edges of $H_1$ is $m$. The number of edges of $H_2$ with both endpoints in $U \cap U'$
	is at most $(t-1)a(H)$, by the definition of $a(H)$. Hence the number of edges of $H_1 \cup H_2$
	is at least $2m-(t-1)a(H)$. The probability that $G$ contains a labeled subgraph on this amount of edges is
	therefore at most $p^{2m-(t-1)a(H)}$. As there are at most $h!$ choices for $H_1$ and at most $h!$ choices for
	$H_2$, we have that
	$$
		\Pr[A(U) \cap A(U')] \le (h!)^2 p^{2m-(t-1)a(H)}\;.
	$$
	It follow that
	$$
		\Delta \le \sum_{t=2}^{h-1} h^t (rn^\alpha)^{2h-t} (h!)^2 p^{2m-(t-1)a(H)}=
		\sum_{t=2}^{h-1} (h!)^2 h^t r^{2h-t} n^{2h\alpha-\frac{2m}{a^*}-\frac{a(H)}{a^*}+t(\frac{a(H)}{a^*}-\alpha)}\;.
	$$
	By \eqref{e:alpha}, the largest summand occurs when $t$ is smallest, i.e. when $t=2$. Thus, we get that
	\begin{equation}\label{e:delta}
		\Delta \le (h-2)(h!)^2 h^t r^{2h-t} n^{2(h-1)\alpha-\frac{2m}{a^*}+\frac{a(H)}{a^*}}
		< (2h)^{4h+1} n^{2(h-1)\alpha-\frac{2m}{a^*}+\frac{a(H)}{a^*}}\;.
	\end{equation}
We first consider the case where $\Delta \le \mu$.
Observe that by \eqref{e:mu} we have that
$$
\mu \ge  h^{-h} n^{h(\alpha-\frac{a(H)}{a^*})+\frac{a(H)}{a^*}}
$$
so we have by Janson's inequality (see \cite{AS-2004} Theorem 8.1.1) and the last inequality that
$$
	\Pr[\cap_{U} \overline{A(U)}] \le e^{-\mu/2}  \le  e^{-0.5h^{-h} n^{h(\alpha-\frac{a(H)}{a^*})+\frac{a(H)}{a^*}}} = o(n^{-2h n^\alpha})
$$
where in the last inequality we have used that
$$
h(\alpha-\frac{a(H)}{a^*})+\frac{a(H)}{a^*} > \alpha
$$
which indeed holds by \eqref{e:alpha}.

Consider next the case where $\Delta \ge \mu$.
By \eqref{e:mu} and \eqref{e:delta} we have that
$$
\frac{\mu^2}{2\Delta} \ge \frac{h^{-2h}n^{2\alpha h - \frac{2m}{a^*}}}{2(2h)^{4h+1}n^{2(h-1)\alpha-\frac{2m}{a^*}+
	\frac{a(H)}{a^*}}}=\Theta(n^{2\alpha-\frac{a(H)}{a^*}})
$$
so we have by Janson's inequality (see \cite{AS-2004} Theorem 8.1.2) and the last inequality that
$$
\Pr[\cap_{U} \overline{A(U)}] \le e^{-\frac{\mu^2}{2\Delta}} =  o(n^{-2h n^\alpha})
$$
where in the last inequality we have used that
$$
2\alpha-\frac{a(H)}{a^*} > \alpha
$$
which indeed holds by \eqref{e:alpha}.
\end{proof}

\begin{proof}[Completing the proof of Theorem \ref{t:3}]
	Let $\alpha$ be the constant from Lemma \ref{l:main-t3}. Define
	$$
	c^* = \frac{(1-\alpha)}{2\log r}
	$$
	and recall that $x= \lfloor c^*\log n \rfloor$.
	By Lemma \ref{l:main-t3}, $G \sim \vec{G}(n,p)$ almost surely satisfies the property that for all
	$r$-tuples $(V_1,\ldots,V_r)$ of disjoint sets of vertices of $G$ where $|V_i| = \lfloor n^\alpha \rfloor$, there is an $H$-copy of $G$ consistent with $(V_1,\ldots,V_r)$. So, hereafter we assume that $G$ indeed satisfies this property. We must show that for every set $X=\{\pi_1,\ldots,\pi_x\}$ of
	permutations of $[n]$, there is an $H$-copy of $G$ such that each element $\pi \in X$ it holds that
	$G_L(\pi)$ contains at most $s(H)$ edges of that copy. So, hereafter we fix an arbitrary $X=\{\pi_1,\ldots,\pi_x\}$ and show that such an $H$-copy exists.
	
	Let $n-r^x < N \le n$ where $N$ is a multiple of
	$r^x$. Recall also that $r=2^t$ for some positive integer $t$ and that $r \le 2h-2$.
	By Lemma \ref{l:consistent} there exist disjoint sets $A_1,\ldots,A_r$ with $A_j \subset [N] \subseteq [n]$
	that are consistent with $X$ (in fact, already consistent with the restriction of each $\pi_i$ to $[N]$) and furthermore, $|A_j|=N/r^x$.
	We claim that, in fact, $|A_j| \ge \lfloor n^\alpha \rfloor$.
	Observe indeed that by the definition of $c^*$, we have that
	$$
	x= \lfloor c^*\log n \rfloor \le \frac{(1-\alpha)\log n -1} {\log r}
	$$ 
	which implies  that
	$$
	r^x \le \frac{n^{1-\alpha}}{2} \le \frac{n}{n^\alpha +1}
	$$
	implying that
	$$
	\frac{N}{r^x} \ge \frac{n-r^x}{r^x} \ge \frac{n}{r^x}-1 \ge n^\alpha\;.
	$$
	In particular, we can fix subsets $V_i \subseteq A_i$ with $|V_i|=\lfloor n^\alpha \rfloor$.
	By the property of $G$, we have that there is an $H$-copy of $G$ that is consistent with
	$(V_1,\ldots,V_r)$. But this means that for each $\pi_i \in X$, when restricted to the vertices of that $H$-copy, all vertices of a given color class of the coloring $C$ are consecutive.
	By the definition of skewness, $G_L(\pi_i)$ contains at most $s(H)$ edges of that copy.
\end{proof}

\section{Concluding remarks and open problems}\label{s:final}

We have proved that for every totally balanced dag $H$ that is not a rooted star, the exponent $-1/a(H)$ is a threshold for covering the $H$-copies of a digraph by directed acyclic subgraphs. Namely, for every $a^* > a(H)$ it holds that
almost surely that $\tau(H,G)=\Theta(\log n)$ while for every $a^* < a(H)$ it holds almost surely that
$\tau(H,G) \le 1$, where $G \sim \vec{G}(n,n^{-1/a^*})$. By Proposition \ref{prop:1}, this determines the
correct threshold exponent for almost all \footnote{By ``almost all'' mean that a random graph $G(h,\frac{1}{2})$ is almost surely totally balanced (hence all of its acyclic orientations are totally balanced dags).} dags.

The first natural question is whether the same result holds for all dags that are not rooted stars. Let us first note that even though
$\tau(H,G)$ is a monotone parameter, it is not obvious that
a threshold exponent even exists in the above sense (namely, jumping from $\tau(H,G) = \Theta(\log n)$ to $\tau(H,G) \le 1$). We next show that the answer is generally false: there are some dags for which $-1/a(H)$ is not the threshold exponent. Let $H$ be the dag from Figure \ref{f:H}.
Clearly $a(H)=3/2$. However:
\begin{figure}
	\includegraphics[scale=0.8,trim=-80 420 500 20, clip]{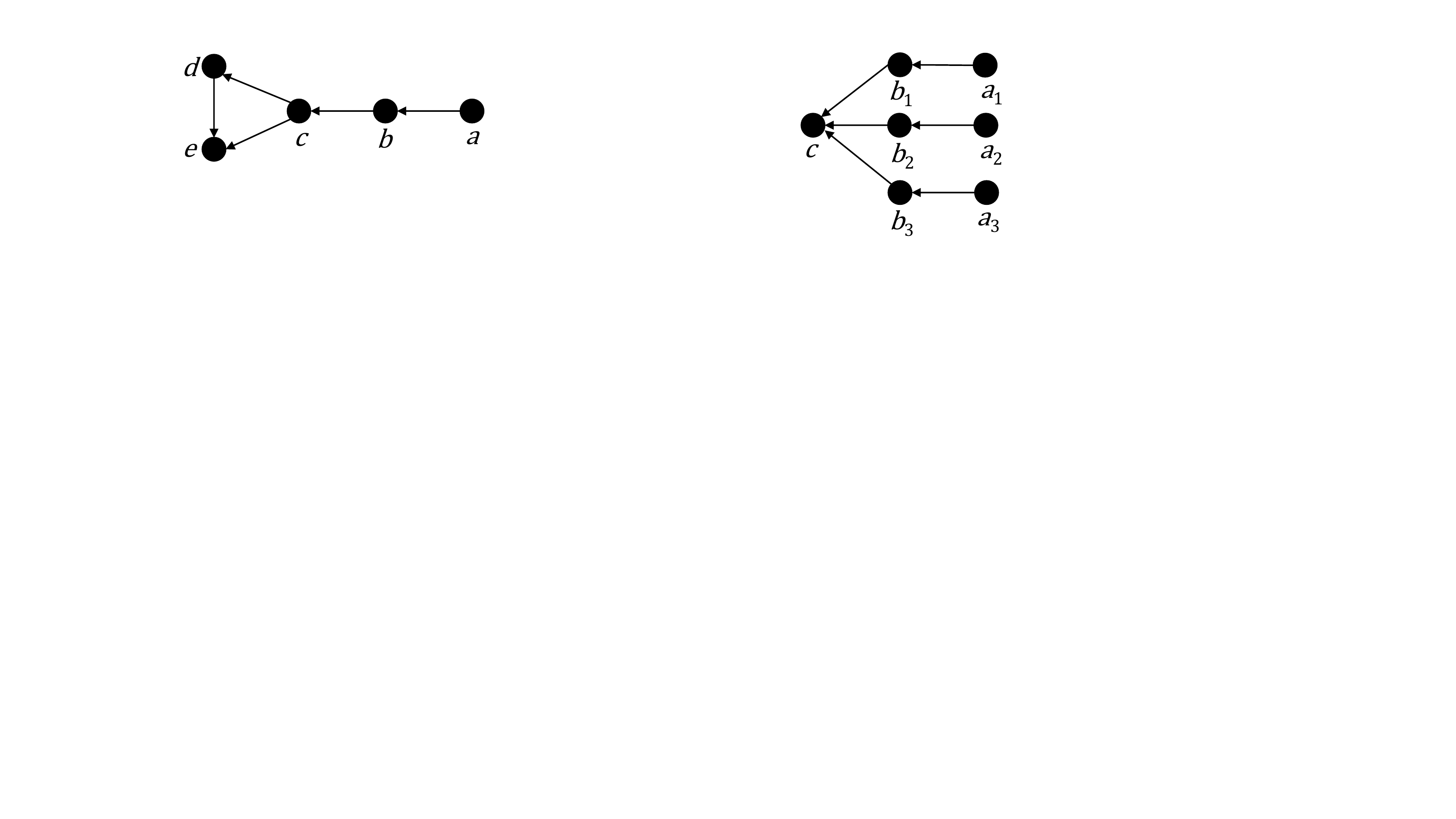}
	\caption{A dag $H$ for which $-1/a(H)$ is not the threshold exponent.}
	\label{f:H}
\end{figure}

\begin{proposition}\label{prop:h}
	Let $H$ be the dag from Figure \ref{f:H}. Then for all $a^* > 4/3$ it holds for $G \sim \vec{G}(n,n^{-1/a^*})$
	that $\tau(H,G)=\Theta(\log n)$.
\end{proposition}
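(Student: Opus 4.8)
The plan is to follow the lower‑bound scheme behind Theorems \ref{t:1} and \ref{t:3}: the upper bound $\tau(H,G)=O(\log n)$ is free from $\tau(H,G)\le\tau(H,D_n)$, so I would only produce, for every family $X=\{\pi_1,\dots,\pi_x\}$ of $x=\lfloor c^*\log n\rfloor$ permutations of $[n]$, an $H$-copy of $G$ that is covered by no $\pi_i$. As in Section \ref{s:t-3}, from $X$ I would first extract via Lemma \ref{l:consistent} two disjoint sets $A_1,A_2\subseteq[n]$, each of size about $n^{1-c^*\log 2}$, that are consistent with $X$. Since $H$ is not a rooted star it has two disjoint edges $e_1=(u_1,v_1)$, $e_2=(u_2,v_2)$ (in Figure \ref{f:H} one may pick them among the ``light'' edges, i.e.\ outside the copy of the transitive triangle witnessing $a(H)=3/2$); colouring $\{u_1,v_2\}$ red and $\{v_1,u_2\}$ blue, any $H$-copy $\phi$ with $\{\phi(u_1),\phi(v_2)\}\subseteq A_1$ and $\{\phi(v_1),\phi(u_2)\}\subseteq A_2$ has, in every $\pi_i$, at least one of $e_1,e_2$ reversed, hence is uncovered. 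So the task reduces to showing that almost surely every such pair $(A_1,A_2)$ admits some $H$-copy of $G$ with the four vertices $u_1,v_2,v_1,u_2$ confined in this way.

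The key idea — and the reason the threshold can dip below $n^{-1/a(H)}=n^{-2/3}$ — is that only this light four‑vertex configuration, not the whole $H$-copy, need be squeezed into the polynomially small consistent sets; the transitive triangle of $H$ and all remaining vertices are embedded freely in $[n]$. Because $G\sim\vec G(n,n^{-1/a^*})$ almost surely has minimum in‑ and out‑degree $\gg 1$, the light pendant/path edges of $H$ can always be attached greedily once the denser part is placed, so the free part is harmless. A first‑moment / Janson computation of the type in Lemma \ref{l:main-t3} — counting $H$-copies with the prescribed confinement and estimating the pairwise‑intersection sum — should then go through precisely when one can pick $\alpha<1$ exceeding $\tfrac{4/3}{a^*}$, i.e.\ whenever $a^*>4/3$, the constant $4/3$ being the fractional arboricity of the confined‑plus‑accompanying subgraph of this particular $H$.

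The delicate point, which I expect to be the main obstacle, is exactly the second‑moment bound: the pairwise‑intersection sum $\Delta$ is dominated by pairs of $H$-copies that agree on a copy of the dense triangle of $H$, and the naive estimate of that contribution forces $a^*>a(H)=3/2$, not $a^*>4/3$. Pushing the argument down to $a^*>4/3$ requires using the specific incidence pattern of Figure \ref{f:H} — how the light edges sit relative to the triangle, so that confining their endpoints pins enough of each copy that such collisions become rare enough — which reduces to a short, case‑sensitive verification that every subgraph $F\subseteq H$ that can be shared by two admissibly confined copies satisfies $|E(F)|\le\tfrac43(|V(F)|-1)$ after the confined vertices are discounted; should this turn out too costly, the alternative is to replace ``exhibit one uncovered copy'' by a counting argument over the (abundantly many, once $a^*>4/3$) triangle copies of $G$ and their possible completions. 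Granting the variance estimate, a union bound over the $\le n^{O(n^\alpha)}$ admissible pairs $(A_1,A_2)$ together with Lemma \ref{l:consistent} yields $\tau(H,G)\ge c^*\log n$, and hence $\tau(H,G)=\Theta(\log n)$, for every $a^*>4/3$.
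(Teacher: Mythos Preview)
Your proposal has a concrete gap at precisely the point you flag as ``delicate.'' First a small factual slip: the two edges of $H$ outside the triangle are $(a,b)$ and $(b,c)$, and these share the vertex $b$, so there are no two \emph{disjoint} light edges to pick. More seriously, no matter which confinement you choose (colouring $\{a,c\}$ and $\{b\}$ and leaving $d,e$ free, or colouring $\{a,e\}$ and $\{b,d\}$ and leaving $c$ free), the Janson second moment $\Delta$ is dominated by pairs of $H$-copies sharing the triangle $\{c,d,e\}$. Working this out, one gets $\mu^2/\Delta$ of order $n^{\alpha+2-3/a^*}$ (resp.\ $n^{2\alpha+1-3/a^*}$), and beating the union-bound cost $n^{\Theta(n^\alpha)}$ then forces $\alpha>3/a^*-1$ together with $\alpha<1$, i.e.\ $a^*>3/2$, not $a^*>4/3$. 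Your suggested ``discounting'' criterion does not rescue this: the shared triangle has $|E(F)|=3>\tfrac{4}{3}(|V(F)|-1)=\tfrac{8}{3}$ however many of its vertices are confined, so the one-shot Janson route genuinely stalls at $3/2$.

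The paper sidesteps the variance obstruction by a two-stage argument that decouples the triangle from the path. It first shows that almost surely the set $U^*$ of vertices that occur as the source of some $T_3$-copy has size $\Theta(n^{3-3/a^*})$, and that after deleting the $O(n^{2-2/a^*})+O(n^{4-5/a^*})$ vertices lying on a $2$-cycle or on a $4$-vertex $5$-edge subgraph, the remaining set $U$ still has $|U|\ge n^\beta$ for some $\beta$ with $3-3/a^*>\beta>1/a^*$; such a $\beta$ exists exactly when $a^*>4/3$. It then uses a hereditary form of the lower bound for $P_3$: since $a(P_3)=1$, almost surely \emph{every} induced subgraph $G[U]$ with $|U|\ge n^\beta>n^{1/a^*}$ satisfies $\tau(P_3,G[U])\ge c^*\log n$. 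Given any $X$ with $|X|<c^*\log n$, this yields an uncovered $P_3$ on vertices $a,b,c\in U$; as $c\in U$ it is the source of some $T_3$ on $\{c,d,e\}$, and the cleaning of $U$ (no $2$-cycles, no dense $4$-sets through $c$) forces $d,e\notin\{a,b\}$, giving an uncovered $H$-copy. Your fallback remark about ``counting over triangle copies'' points in this direction, but the missing idea is to pass to the induced subgraph on triangle-sources and run the covering lower bound for $P_3$ there, rather than for $H$ globally.
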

\begin{proof}[Proof (sketch)]
	By monotonicity, it suffices to prove the proposition for all, say, $3/2 > a^* > 4/3$.
	Fix $3/2 > a^* > 4/3$ and consider $G \sim \vec{G}(n,n^{-1/a^*})$.
	We define four properties, P1, P2, P3, P4 and show that each holds almost surely.
	
	\noindent
	{\em Property P1:} ``The number of directed cycles of length $2$ in $G$ is $\Theta(n^{2-2/a^*})$''. For any
	pair of vertices $u,v$, the probability that both $(u,v)$ and $(v,u)$ are edges is $n^{-2/a^*}$ and there are
	$\binom{n}{2}$ pairs to consider. By Chebyshev's inequality, the probability of deviating from the expected amount $\Theta(n^{2-2/a^*})$ by at most a constant factor is $1-o_n(1)$. Thus, P1 holds almost surely.

	\noindent
	{\em Property P2:} ``The number of subgraphs of $G$ on four vertices and five edges is
	$\Theta(n^{4-5/a^*})$''. Let $K$ be a digraph on four vertices and five edges (note: there are only a constant number of possible digraphs on four vertices and five edges). The number of $K$-copies in $G$
	is $\Theta(n^{4-5/a^*})$. Indeed, for any four labeled vertices, the probability that they contain
	a labeled copy of $K$ is $n^{-5/a^*}$ and there are $\Theta(n^4)$ choices for such four labeled copies.
	Again, by Chebyshev's inequality it is easy to show that the probability of deviating from the expected amount $\Theta(n^{4-5/a^*})$ by at most a constant factor is $1-o_n(1)$. Thus, P2 holds almost surely.
	
	\noindent
	{\em Property P3:} ``The number of vertices that appear in some $T_3$-copy of $G$ as the
	source vertex (e.g. vertices like $c$ in Figure \ref{f:H} inside the $T_3$ induced by $\{c,d,e\}$) is
	$\Theta(n^{3-3/a^*})$.'' This is analogous to the arguments in P1 and P2. Thus, P3 holds almost surely.
	
	\noindent
	{\em Property P4:}
	Let $P_3$ denote the directed path on two edges and three vertices. Fix some $3-3/a^* > \beta > 1/a^*$.
	Observe that $\beta$ exists since $a^* > 4/3$.
	As the fractional arboricity of every tree, in particular, of $P_3$, is $1$, it is not difficult to prove (similar to the proof of Theorem \ref{t:3}) that almost surely, the following property P4 holds:
	``For every subset $U \subset V(G)$  with $|U| \ge n^\beta$ it holds that $\tau(P_3,G[U]) \ge c^*\log n$
	where $c^*$ is a constant depending only on $\beta,a^*$ and where $G[U]$ is the subgraph of $G$ induced by $U$.
	
	Let $G$ be a graph for which P1, P2, P3, P4 hold. Let $U^*$ be the set of vertices of $G$ appearing in some $T_3$-copy of $G$
	as the source vertex. By P3, we have that $|U^*| = \Theta(n^{3-3/a^*})$.
	Remove from $U^*$ all vertices that are contained in directed cycles of length $2$ and also all vertices
	contained is subgraphs on four vertices and five edges, remaining with a subset $U$. By P1 and P2
	we have that $|U| = \Theta(n^{3-3/a^*}) -\Theta(n^{2-2/a^*})-\Theta(n^{4-5/a^*}) = \Theta(n^{3-3/a^*}) \ge n^\beta$. Consider any set $X$ of permutations of $V(G)$ with $|X| < c^*\log n$. 
	Now, by P4, we have
	that there is some $P_3$-copy of $G[U]$ that is not covered by any element of $X$ (viewing each permutation of $X$ as restricted to $U$). Let the vertices of such a $P_3$-copy be $a,b,c$ where $(a,b)$ and $(b,c)$ are edges. Since each vertex of $U$ is a source vertex of some $T_3$-copy, there is a $T_3$-copy of
	$G$ containing $c$. So, let the vertices of this copy be $\{c,d,e\}$ as in Figure \ref{f:H}.
	Now, we must have $b \neq d$ and $b \neq e$ since $c$ is not on any directed cycle of length $2$.
	It must also be that $a \neq d$ and $a \neq e$ since $c$ is not on any subgraph on four vertices and five edges. In any case, we have that $G$ contains an $H$-copy that is uncovered by $X$.
\end{proof}

\begin{problem}\label{prob:1}
	Is there a threshold exponent for all dags that are not rooted stars, and if so, what is it?
\end{problem}
\noindent By Theorem \ref{t:1}, if the answer to Problem \ref{prob:1} is yes, then the threshold exponent is at most $-1/a(H)$.

Another question is whether $s(H)$ (skewness) is the optimal parameter of choice in the statement of Theorem
\ref{t:3}. Perhaps for some graphs even a value smaller than $s(H)$ (but of course at least $m/2$) still satisfies
the statement of the theorem? Clearly, the answer is no whenever $s(H)=\lceil m/2 \rceil$ and also in some other
cases, e.g. when $H$ is obtained from a rooted star with $m \ge 4$ edges by flipping one edge.
The smallest (in terms of $h$) case for which we can ask this question is the transitive tournament $T_4$.
Notice that $s(T_4)=4$ while $m/2=3$ and $a(T_4)=2$.
\begin{problem}
	Does the following hold for all $a^* > 2$:
	There is a constant $c^* = c^*(a^*) > 0$ such that almost surely $G \sim {\vec G}(n,n^{-1/a^*})$ has the property that for every set $X$ of at most $c^*\log n$ permutations, there is a $T_4$-copy of $G$ such that each element of $X$ contains at most three edges of that copy.
\end{problem}

\end{document}